\newcolumntype{N}[1]{>{\centering\arraybackslash}m{#1}}
\newcommand{\tpitchfork}{%
  \vbox{
    \baselineskip\z@skip
    \lineskip-.52ex
    \lineskiplimit\maxdimen
    \m@th
    \ialign{##\crcr\hidewidth\smash{$-$}\hidewidth\crcr$\pitchfork$\crcr}
  }%
}
\newcommand{\Mod}[1]{(\mathrm{mod}\ #1)}
\definecolor{refkey}{rgb}{1,0,0}
\definecolor{labelkey}{rgb}{1,0,0}
\def\zhong{\leavevmode\vbox{\offinterlineskip\halign{\tabskip=0pt\hfil##\hfil\cr\vrule height 2ex \cr\noalign{\vskip  -1.3ex}$\Box$\cr}}}
  \mathchardef\ordinarycolon\mathcode`\:
\theoremstyle{plain}
\newtheorem{thm}{Theorem}[section]
\newtheorem{lem}[thm]{Lemma}
\newtheorem{prop}[thm]{Proposition}
\newtheorem{cor}[thm]{Corollary}
\newtheorem{defi}[thm]{Definition}
\begin{document}

\title{\LARGE{Global Hopf bifurcation in networks\\
with fast feedback cycles}}

\author{
 \\
Bernold Fiedler*\\
{~}\\
\emph{Dedicated to Alexander Mielke} \\
\emph{on the occasion of his sixtieth birthday}\\
\vspace{2cm}}

\date{version of \today}
\maketitle
\thispagestyle{empty}

\vfill

*\\
Institut für Mathematik\\
Freie Universität Berlin\\
Arnimallee 3\\ 
14195 Berlin, Germany\\
\\


\newpage
\pagestyle{plain}
\pagenumbering{roman}
\setcounter{page}{1}

\begin{abstract}

\noindent
Autonomous sustained oscillations are ubiquitous in living and nonliving systems. As open systems, far from thermodynamic equilibrium, they defy entropic laws which mandate convergence to stationarity.
We present structural conditions on network cycles which support global Hopf bifurcation, i.e. global bifurcation of non-stationary time-periodic solutions from stationary solutions.
Specifically, we show how monotone feedback cycles of the linearization at stationary solutions give rise to global Hopf bifurcation, for sufficiently dominant coefficients along the cycle.

\medskip\noindent
We include four example networks which feature such strong feedback cycles of length three and larger:
Oregonator chemical reaction networks, 
Lotka-Volterra ecological population dynamics,
citric acid cycles, and 
a circadian gene regulatory network in mammals.
Reaction kinetics in our approach are not limited to mass action or Michaelis-Menten type.

\end{abstract}

\vspace{2cm}
\tableofcontents


\newpage
\pagenumbering{arabic}
\setcounter{page}{1}

\section{Reaction networks and oscillations}\label{sec1a}

Network graphs are a common modeling device to describe dependencies of certain sub-units among each other.
Vertices indicate those sub-units.
Directed edges indicate coupling directions, or positive and negative signs of influence.
Popular examples in a differential equations context are chemical reaction systems, neural networks, power grids, and many others.
Where emphasis may have been on equilibration and steady state behavior, originally, more recent focus has shifted much towards the complexities of temporal and spatial patterns of the collective vertex behavior.
The main objective, in the present paper, is to explore the potential of network structures, as such, towards autonomously time periodic network responses.
Particulars of coupling parameters will play a subordinate role in that quest.
Mostly we address large ranges of parameters.
A slow-fast constraint, however, will emphasize a select feedback cycle in the network.

Let us be more specific.
Chemical reaction networks, for example, take the form 
\begin{equation}
\dot{x}= f(x)= \sum_j(\bar{y}_j-y_j)r_j(x)
\label{eq:1.1}
\end{equation}
with positive vectors $x$ of the metabolite concentrations $x_m,\ m=1, ..., M,$ finitely many nonnegative \emph{stoichiometric coefficient} vectors $y_j\neq \bar{y}_j\in \mathbb{R}^M$, and positive reaction rate functions
\begin{equation}
r_j>0\,.
\label{eq:1.2}
\end{equation}

In chemical notation, the $j$-th summand in \eqref{eq:1.1} accounts for the reaction
\begin{equation}
j: \quad  y_{j1}X_1+\ldots +y_{jM}X_M \ \longrightarrow \ \bar{y}_{j1} X_1+\ldots +\bar{y}_{jM}X_M\,.
\label{eq:1.3}
\end{equation}
One possibility to view \eqref{eq:1.1} as a network takes the metabolites $X_m$ with concentrations $x_m$, as vertices $m$, and dependencies of $f_{m}$ on $x_{m'}$, as directed edges $m' \rightarrow m$.
See our comments on the general setting \eqref{eq:1.10} below.
Another possibility, suggested by \eqref{eq:1.3}, is to take the vectors $y,\bar{y}$ as vertices, with reaction arrows as edges. 
See \eqref{eq:1.9a} below.

\emph{Educts} or \emph{inputs} $m$ of reaction $j$ are defined by nonzero $y_{jm}>0$, and \emph{outputs} by nonzero $\bar{y}_{jm}$.
Nonzero $\bar{y}_{jm}=y_{jm}$ describe a \emph{catalyst} $m$, for which reaction $j$ does not affect $x_m$.
\emph{Strong autocatalysis} of $m$, which catalyzes its own net production, is described by
\begin{equation}
\bar{y}_{jm}>y_{jm}\,.
\label{eq:1.4}
\end{equation}

\emph{Mass action kinetics}, prevalent in large parts of classical anorganic chemistry, and in gas phase reactions in particular, is defined by
\begin{equation}
r_j(x)=k_jx^{y_j}:=k_{j}x_{1}^{y_{j1}}\cdot\ldots\cdot x_M^{y_{jM}}\,,
\label{eq:1.5}
\end{equation}
usually for integer-valued $y_j, \bar{y}_j$, with the convention $x_m^0:=1$.
The \emph{rate coefficients} $k_j$ are assumed to be strictly positive; see \eqref{eq:1.2}. Reactions catalyzed by enzymes, ubiquitous in biological metabolic networks, allow for more general \emph{Michaelis-Menten kinetics} of the form
\begin{equation}
r_j(x)=k_j \prod_m\, (x_m/(1+c_{jm}x_m))^{y_{jm}}
\label{eq:1.6}
\end{equation}
with saturation coefficients $c_{jm}\geqslant0$.
Usually $y_{jm}\in\{0,1\}$.
Note how \eqref{eq:1.6} reduces to mass action \eqref{eq:1.5}, for $c_{jm}=0$.
The denominator is often ``linearized'', inappropriately, to become $1+c^T_jx$.
The partial derivatives of \eqref{eq:1.6} satisfy
\begin{equation}
r_{jm}:= \partial_{x_m}r_j>0 \qquad \Longleftrightarrow \qquad y_{jm}>0.
\label{eq:1.7a}
\end{equation}
\emph{Enzymatic inhibition} of $r_j$ by $x_m$, in contrast, is characterized by $\bar{y}_{jm}=y_{jm}$ and a factor $1/(1+c_{jm}x_m)$, so that $r_{jm}<0$ instead.
See sections \ref{sec5.3} and \ref{sec5.4} for examples.
Therefore we will not subscribe to any monotonicity constraints, in the general mathematical setting of section \ref{sec1b} below.
For in-depth information on the very rich subject of chemical reaction kinetics we refer to the currently 43 volumes of the book series Comprehensive Chemical Kinetics \cite{CCK}.
For a comprehensive background on chemical reaction networks see \cite{Fei19}, by a leading pioneer in the field.

Thermodynamics of closed systems advocates convergence to \emph{steady state equilibria} $x^*$ of \eqref{eq:1.1}, i.e. to stationary solutions 
\begin{equation}
0= f(x^*)=\sum_j \, (\bar{y}_j-y_j) r_j(x^*),
\label{eq:1.7b}
\end{equation}
The basic tool is a Lyapunov function
\begin{equation}
V(x):= \sum_m \,   f_m(x)\cdot v(x_m/x_m^*),
\label{eq:1.8}
\end{equation}
with $v(\xi):=\xi \log\xi-\xi+1$, related to the negative of \emph{relative entropy}.
The defining property of Lyapunov functions is that $t\mapsto V(x(t))$ is decreasing along solutions $x(t)$, usually strictly when $x(t)$ is nonstationary.
The Lyapunov property of $V$ has been established in \cite{HJ74} under the assumptions of mass action kinetics \eqref{eq:1.5} and the following complex balance condition \eqref{eq:1.9a}.

From \eqref{eq:1.3} we recall how the stoichiometric vectors $y_j, \bar{y}_j$ may be taken as vertices of the \emph{complex graph} $\mathcal{C}$, possibly including the complex $y_j=0$ and/or $\bar{y}_j=0$.
Note how identical vectors $y_j$ or $\bar{y}_j$ for different $j$ may describe the same vertex complex.
The directed edges $j$ of $\mathcal{C}$ are simply the reaction arrows \eqref{eq:1.3} of standard chemical notation.
Then \emph{complex balance} requires the existence of a positive equilibrium $x^*>0$ such that the inflows and outflows balance, i.e.
\begin{equation}
\sum_{j:\ \bar{y}_j=y} r_j(x^*)= \sum_{j':\ y_{j'}=y} r_{j'}(x^*)\,,
\label{eq:1.9a}
\end{equation}
at every nonzero complex $y$.
In other words, the total production rate at any complex $y=\bar{y}_j$ as an output of reactions $j$, balances the total consumption rate at the same complex $y=y_{j'}\,$, as an input of other reactions $j'\neq j$, just as currents do at vertices of Kirchhoff circuits.

\emph{Detailed balance} in reversible reaction systems $j^\pm$: $y_{j^\pm}\leftrightharpoons \bar{y}_{j^\pm}$, where $\bar{y}_{j^+}=y_{j^-}$ and $\bar{y}_{j^-}=y_{j^+}$ for all $j^\pm$, is a special case of complex balance, already considered by \cite{Weg1902}. It requires $r_{j^+}(x^*)=r_{j^-}(x^*)$ for every reversible pair $j^\pm$.
For reversible monomolecular cycles $y_{j^+}=\mathbf{e}_j, \ y_{j^-}=\mathbf{e}_{j+1}\,,\ j\,\Mod N$ and mass action kinetics, detailed balance amounts to the famous Wegscheider condition
\begin{equation}
\prod^N_{j=1}   k^+_j=\prod^N_{j=1} k^-_j,
\label{eq:1.9b}
\end{equation}
which prevents oscillations.
Wegscheider's arguments for \eqref{eq:1.9b} were based on thermodynamic considerations on irreversibility, at the microscopic level.
It is a lasting merit of \cite{Hir1911} to point at the possibility of (damped) oscillations, once the Wegscheider constraints \eqref{eq:1.9b} are strongly violated.
Our emphasis below on unidirectional $N$-cycles, as a cause for global Hopf bifurcation, is essentially based on this insight.

In passing we note how reversible monomolecular cycles lead to \emph{Jacobi systems}
\begin{equation}
\dot{x}_m=f_m(x_{m-1}\,, x_m, x_{m+1}),
\label{eq:1.9c}
\end{equation}
for $m\,\Mod N$ with strictly positive off-diagonal partial derivatives $\partial_{x_{m\pm1}} f_m$.
See \cite{FuOl88} for a detailed study.
Standard mass action makes 
\begin{equation}
f_m=k^+_{m-1} x_{m-1}-(k^-_{m-1}+k^+_m)x_m + k^-_m x_{m+1}
\label{eq:1.9d}
\end{equation}
linear, and $x_1+\ldots+x_N\equiv \mathrm{const} $ is preserved.
Spectral analysis, similar to the case $\beta=+1$ in proposition \ref{prop:3.1} below, then implies stability of steady states, due to the presence of a positive (left) kernel vector.
Alternatively, complex balance for the unit vector complexes $y_m=\mathbf{e}_m$ can be invoked.
Note how the addition of strongly autocatalytic diagonal terms like $X_m\rightarrow2X_m$ can lead to sustained oscillations and instability.
Similar remarks apply to $N$-cycles with general monotone reaction rates $r^\pm_j$.

Complex balance is clearly sufficient for $x^*$ to be a stable steady state \eqref{eq:1.7b} of \eqref{eq:1.1}.
Notably \cite{Mie17} has much extended the ODE stability results, for the mass action Lyapunov function $V$ in \eqref{eq:1.8}, to a reaction-diffusion PDE context under Neumann boundary conditions.
His results start from a general observation in \cite{Ali79}. 
Mielke's extensions include exponential convergence results and cover the presence of stoichiometric invariant subspaces.

Our present paper  will remain in an ODE setting, for simplicity of presentation, even though our approach extends to the PDE setting of reaction-diffusion systems.  
Based on fast $N$-cycles, we study the appearance of time periodic solutions, in contrast to equilibration and beyond the variational complex balance setting \eqref{eq:1.9a}.

Experimental evidence for chemical oscillations has become overwhelming, by now \cite{Zha91}.
We recall a few highlights.
As early as 1828, Fechner has observed transient polarity reversals in an electro-chemical experiment \cite{Fe1828}; see also \cite{He1901}.
The celebrated integrable Lotka-Volterra model \cite{Lot1920} has been described by Lotka, originally, as a hypothetical model for sustained time-periodicity in a chemical reaction with mass action kinetics, and not in the tradited Volterra context \cite{Vol1931} of predator-prey population dynamics.
A first chemical experiment with sustained autonomous oscillations was described in \cite{Bray1921}.
Experiments on the now famous \emph{Belousov-Zhabotinsky reaction} (BZ) by Belousov in the 1950s were rejected, at first, on ``obvious" thermodynamic grounds.
The BZ reaction is an open system, in fact, and the eventual process of dieing out is irrelevant over the time-span of its oscillations.
A decade later, young Zhabotinsky rehabilitated the findings by Belousov, and managed to get published \cite{Zha64}.
The famous \emph{Brusselator} ``model" \cite{Lef68, PriLef68} for the BZ reaction, by Prigogine and co-workers, had originally just been designed to exhibit and numerically investigate Turing instability \cite{Tur52}.
A model for observed \emph{glycolytic oscillations} in the metabolism of yeast cells was suggested by \cite{Sel68}.
All the above considerations were based on phase plane analysis, i.e. on reaction systems \eqref{eq:1.1} with $M=2$ metabolites $m=1,2$.
The article \cite{Hig67} provides a comprehensive survey and discussion of the planar possibilities.

The chemically more realistic \emph{Oregonator} model \cite{FN74} of the BZ-reaction was an early example of oscillatory chemical reaction networks involving at least $M=3$ metabolites; see section \ref{sec5.1}.
Eigen's quite hypothetical \emph{hypercycle} \cite{Eig71}, of course, also known as the \emph{replicator equation}, features cycles of any length $N$ in an attempt to model molecular evolution; see also the book \cite{HS98}.
It can be seen as a projective version, for population percentages, of general \emph{Lotka-Volterra models} \cite{Oli14} discussed in section \ref{sec5.2}.
Oscillations in the famous \emph{citric acid cycle} (CAC, Krebs cycle) involving eight metabolites have been described, experimentally, by \cite{MacDetal03}; see section \ref{sec5.3} below.
In section \ref{sec5.4} we discuss a gene regulatory network for \emph{circadian rhythms} in mammals \cite{Miretal09}.
Non-isothermal oscillations, where the temperature dependence of the rate functions $r_j$ plays a decisive role, have been studied much, in the PDE context of spatially heterogeneous catalysis. See \cite{Aris75, Fie83} for experimental  and mathematical results, as well as \cite{IE95} for a survey of the early developments.

Theoretical results on autonomous time-periodic oscillations are rare, beyond mere numerical simulation.
Mostly, they establish the existence of stationary solutions $f(x^*)= 0$ with purely imaginary  eigenvalues, by the $M$-dimensional Routh-Hurwitz criterion.
Classical local Hopf bifurcation \cite{Hopf1942, MaMcC76, CR77} then is supposed to infer periodic solutions.
For mass action kinetics \eqref{eq:1.5}, however, the computational difficulties seem to grow prohibitively with dimension.
Even best analytic results like \cite{GES05, EEetal15}, which require advanced techniques and concepts from computational algebra, do not proceed beyond $M=3,4$. 
They also fail to address standard prerequisites of local Hopf bifurcation, like spectral nonresonance and transverse crossing conditions.
Instead, our approach will avoid the restrictions of mass action kinetics, and will explore fast feedback cycles in networks as a source of global Hopf bifurcation.

In the next section we outline our main result on global Hopf bifurcation of periodic orbits induced by fast feedback cycles, in a much more general mathematical setting. 
See our main result, theorem \ref{thm:1.2}.
Our precise notion of global Hopf bifurcation involves some subtleties which we postpone to section \ref{sec2}; see definition \ref{def:2.2} and corollary \ref{cor:2.4}.
We recall some tools for global Hopf bifurcation there, as developed in \cite{Fie85}; see theorem \ref{thm:2.3} and corollary \ref{cor:2.4}.
In section \ref{sec3} we collect the prerequisite spectral properties of cyclic monotone feedback systems, in the spirit of \cite{M-PS90}.
In section \ref{sec4}, this enables us to prove theorem \ref{thm:1.2} as an application of corollary \ref{cor:2.4}.
We conclude with the promised four applications, in section \ref{sec5}.

\textbf{Acknowledgment.} 
The present results are based entirely on many pleasantly challenging and always inspiring discussions with Alexander Mielke, including  his beautiful contribution \cite{Mie17}.
The present paper is dedicated to him in deep gratitude and lasting friendship.
We are also indebted to Marty Feinberg, for sharing his pioneering insights so generously and encouragingly, over so many years, and to Nicola Vassena for many helpful comments concerning network dynamics.
Patricia H\u{a}b\u{a}\c{s}escu performed the typesetting with outstanding dedication and diligence, and some expert help from Alejandro Lopez.
This work was partially supported by the Deutsche Forschungs\-gemeinschaft through SFB 910 project A4.

\section{Main result}\label{sec1b}
In this section we formulate our main result, theorem \ref{thm:1.2}, for general ODEs
\begin{equation}
\dot{x}_m=f_m(x),
\label{eq:1.10}
\end{equation}
$m=1, \ldots, M,$ with $C^1$-nonlinearities $f_m$ and for $x\in \mathbb{R}^M$.
Let $x^*$ be a stationary solution of \eqref{eq:1.10}, i.e.~$f(x^*)=0$.
Then the Jacobian $f_x(x^*)=(f_{mm'})$ is given by the partial derivatives
\begin{equation}
f_{mm'}:= \partial_{x_{m'}} f_m(x^*),
\label{eq:1.11}
\end{equation}
for $1\leqslant m, m'\leqslant M$.

In network language, the ODE setting \eqref{eq:1.10} generalizes \eqref{eq:1.1} as follows.
Suppose $f_m=f_m(x_{I(m)})$\,. 
This notation indicates that $f_m$ depends on the component $x_{m'}$ of $x\in\mathbb{R}^M$ if, and only if, $m'$ is in the set $I(m)\subseteq\{1, \ldots, M\}$ of \emph{inputs} of $f_m$.
Then the dependencies $I(m)$ of $f_m$ define a directed graph $\Gamma$ with metabolite vertices $m$ and directed edges $m'\rightarrow m$ from $m'\in I(m)$ to $m$.
We explicitly allow, but do not impose, self-loops $m\in I(m)$.
The graph $\Gamma$ in the setting \eqref{eq:1.10} is commonly employed in the description of gene regulatory networks; see for example \cite{FieMKS13}.
Note $f_{mm'}=0$, unless $m'\in I(m)$.
The full ODE \eqref{eq:1.10} corresponds to the maximal bi-directional graph $\Gamma$ with self-loops, of course.

Based on nonzero entries $f_{mm'}\,,\ m'\in I(m)$, of the Jacobian $f_x(x^*)$, we can now identify fast feedback cycles in the directed graph $\Gamma$, which play the central role for our results on fast oscillations.
For the sake of generality, however, we do not restrict ourselves to any specific directed graph $\Gamma$.
Rather, we focus on dominant $N$-cycles, just on the linear level of the Jacobian $f_x(x^*)$, as a source of oscillations and of global Hopf bifurcation.

\begin{defi}\label{def:1.1}
Fix $N\in\{2,\ldots,M\}$.
Let $\mathbf{m}= (m_1\ \ldots\ m_N)$ denote any ordered $N$-tuple of distinct metabolites $m_k\in \{1, \ldots, M\}$.
We call $\mathbf{m}$ an $N$-\emph{cycle} if
\begin{equation}
\beta_k := f_{m_km_{k-1}} \neq 0
\label{eq:1.12}
\end{equation}
holds, for all indices $k\,\Mod N$.
We say the $N$-cycle possesses \emph{positive} or \emph{negative} feedback, depending on the sign of 
\begin{equation}
\beta := \prod_{k=1}^N \beta_k \neq 0.
\label{eq:1.13}
\end{equation}
For a \emph{nondegenerate} $N$-\emph{cycle} we require, in addition, nonzero self-loops
\begin{equation}
a_k := -f_{m_km_k} \neq 0,
\label{eq:1.14}
\end{equation}
for all $k=1, \ldots, N$.
Motivated by reaction network dynamics, we call the number $0\leq N_\mathrm{aut}\leq N$ of $a_k<0$, i.e. the number of strictly positive self-feedbacks $f_{m_km_k}$, the \emph{autocatalytic number} of the nondegenerate $N$-cycle $\mathbf{m}$.
\end{defi}

We can now describe the detailed setting of our main result, theorem \ref{thm:1.2} below.
It is of crucial importance here, and deviates significantly from previous work in the area, that \emph{we consider the partials}  $f_{mm'}$ \emph{as free parameters which may vary independently of the steady state} $x^*$ \emph{and, to some extent, independently of each other.} 
More precisely we consider networks $\dot{x}=f(\varepsilon, a, x)$ depending on a parameter $a>0$ and a small parameter $\varepsilon > 0$, such that
\begin{equation}
0= f(\varepsilon, a, x^*)
\label{eq:1.15}
\end{equation}
possesses a parameter-independent stationary solution $x^*$.

For the Jacobian at $x^*$ we assume an expansion
\begin{equation}
f_x(\varepsilon, a, x^*)=
\bigg(
\begin{array}{cc}
\mathbf{A}+ \varepsilon\mathbf{A}' &  \varepsilon \mathbf{B}\\
\varepsilon \mathbf{C} & \varepsilon \mathbf{D}
\end{array}\bigg ),
\label{eq:1.16}
\end{equation}
in block matrix form, with small $\varepsilon >0$.
Only $\mathbf{A}=\mathbf{A}(a)$ is allowed to depend on the parameter $a$.
Specifically, we assume that the $N \times N$ block matrix $\mathbf{A} = (f_{mm'})_{1\leq m, m'\leq N}$ of $f_x$, at $\varepsilon = 0$, describes a nondegenerate $N$-cycle, $\det \mathbf{A} \neq 0$.
Here we have relabeled $\mathbf{m}$, without loss of generality, such that $\mathbf{m}= (1\ \ldots\ N)$ in definition \ref{def:1.1}.
We require all other entries of $\mathbf{A}$, not supported on the $N$-cycle, to be zero:
\begin{equation}
f_{mm'}=0 \qquad \textrm{for} \ 1\leq m,m'\leq N, \ \textrm{unless} \  m' \in \{m, m-1\}\ \Mod N.
\label{eq:1.17}
\end{equation}
The blocks $\mathbf{A}', \mathbf{B}, \mathbf{C}, \mathbf{D}$ in the $M\times M$ Jacobian \eqref{eq:1.16} are assumed to remain bounded, uniformly in the small scaling parameter $\varepsilon >0$.
These assumptions make the $N$-cycle described by $\mathbf{A}$ dominant and \emph{fast} compared to the remaining dynamics, on the linear level.

The mathematical motivation for our emphasis on cycles, in addition to \cite{Hir1911}, comes from the Quirk-Ruppert-Maybee theorem; see the beautiful account in \cite{JKD77}.
That theorem addresses matrices $\mathbf{M}$ with prescribed sign structure of their entries. 
It characterizes spectral stability Re spec $ \mathbf{M} \leq 0$, for all $\mathbf{M}$ with the same sign structure, by three simultaneous requirements: nonpositive diagonal elements, nonpositive products over 2 cycles, and vanishing products $\beta$ over $N$-cycles, for $N \geq 3$.
Our results can be seen as an attempt to assert global Hopf bifurcation when the second conditions is violated, by an autocatalysis count $N_\mathrm{aut}>0$, and/or the third condition is violated, by fast $N$-cycles $\mathbf{A}$ with $\mathrm{sign} \ \beta = \pm1$.

The linearization
\begin{equation}
\dot{\xi} = \mathbf{A}\xi, \qquad \mathbf{A} = 
\left(
\begin{array}{cccc}
-a_1&&&\beta_1\\
\beta_2&-a_2&&\\
&\ddots&\ddots&\\
&&\beta_N&-a_N
\end{array}
\right),
\label{eq:1.18}
\end{equation}
on the fast $N$-cycle constitutes a linear \emph{cyclic monotone feedback system}.
See \cite{M-PS90} for a detailed spectral analysis and deep nonlinear consequences.
For simplicity we perform linear rescalings of each $\xi_m$\,, and a positive linear rescaling of time $t$, to normalize the off-diagonal $N$-cycle elements of $\mathbf{A}$ such that
\begin{equation}
\begin{aligned}
& \beta_2= \ldots =\beta_N = +1 \qquad \textrm{and} \\
& \beta = \prod_{m=1}^N \beta_m= \beta_1 = \pm1;\\
\end{aligned}
\label{eq:1.19}
\end{equation}
see \eqref{eq:1.13}.
Note here that the product $\beta$ of the $\beta_m$ is invariant under scalings of the $\xi_m$\,.
Under positive time rescaling the nonzero \emph{feedback sign} becomes $\mathrm{sign}\,\beta = \beta = \beta_1 = \pm1$.
We define the \emph{bifurcation parameter} $a>0$ by the normalization
\begin{equation}
a_m(a) = a \alpha_m, \qquad \prod_{m=1}^N \alpha_m = (-1)^{N_\mathrm{aut}}\,, \quad a^N = \mathrel{\bigg|} \prod_{m=n}^N a_m / \prod_{m=1}^N \beta_m \mathrel{\bigg|}\,,
\label{eq:1.20}
\end{equation}
along the original diagonal of the nondegenerate  $N$-cycle $\mathbf{A}$; see \eqref{eq:1.14}.
For the normalized diagonal elements $\alpha_m \neq 0$ we use the abbreviations $\langle \cdot \rangle$ and $\langle \cdot \rangle_h$ to denote their arithmetic and harmonic means, respectively.
We assume
\begin{equation}
\begin{aligned}
\langle \alpha \rangle &:= \frac{1}{N} \sum_{m=1}^N \alpha_m \neq 0, \quad &\textrm{i.e.}\ \sigma &:= \mathrm{sign} \langle \alpha \rangle \neq 0, \\
\langle 1/ \alpha \rangle &:= \frac{1}{N} \sum_{m=1}^N 1/\alpha_m \neq 0, &&\\
\langle \alpha \rangle_h &:= 1 /\langle 1 / \alpha \rangle \neq 0, \qquad \qquad &\textrm{i.e.}\ \sigma_h &:= \mathrm{sign} \langle \alpha \rangle_h \neq 0, \\
\end{aligned}
\label{eq:1.21}
\end{equation}
whenever $\sigma, \sigma_h$ appear.

In addition to the signs of the above arithmetic and harmonic means, our oscillation conditions will only involve the length $N\geqslant 3$ of the catalytic cycle, and the count $N_\mathrm{aut}$ of diagonal strongly autocatalytic entries $\alpha_m< 0$.
Specifically, we assume any one of the following four cases to hold.
\begin{enumerate}[label=(\roman*)]
\item For positive feedback $\beta = +1$ and $N \not\equiv 0 \  \Mod 4$:
\begin{equation}
\begin{aligned}
\phantom{|} N_\mathrm{aut} -2 \lfloor N/4\rfloor -1\phantom{|}\ &=\ \sigma_h\,, \quad \mathrm{or}\\
|N_\mathrm{aut} -2 \lfloor N/4\rfloor-1|\ &>\ 1. \\
\end{aligned}
\label{eq:1.22}
\end{equation}
\item For positive feedback $\beta = +1$ and $N\equiv 0 \ \Mod 4$:
\begin{equation}
\begin{aligned}
\phantom{|} N_\mathrm{aut} -2 N/4 +\sigma \phantom{|}\ &=\ \sigma_h\,, \quad \mathrm{or}\\
| N_\mathrm{aut} -2 N/4+ \sigma | \ &>\ 1. \\
\end{aligned}
\label{eq:1.23}
\end{equation}
\item For negative feedback $\beta = -1$ and $N \not\equiv 2 \ \Mod 4$:
\begin{equation}
\begin{aligned}
\phantom{|} N_\mathrm{aut} -2 \lfloor(N+2)/4\rfloor \phantom{|}\ &=\ \sigma_h\,, \quad \mathrm{or}\\
| N_\mathrm{aut}-2 \lfloor(N+2)/4\rfloor | \ &>\ 1; \\
\end{aligned}
\label{eq:1.24}
\end{equation}
\item For negative feedback $\beta = -1$ and $N \equiv 2 \ \Mod 4$:
\begin{equation}
\begin{aligned}
\phantom{|} N_\mathrm{aut} -2 N/4 +\sigma\phantom{|}\ &=\ \sigma_h\,, \quad \mathrm{or}\\
| N_\mathrm{aut} -2 N/4 + \sigma | \ &>\ 1. \\
\end{aligned}
\label{eq:1.25}
\end{equation}
\end{enumerate}

Here $\lfloor\cdot\rfloor$ denotes  the integer floor function.
We repeat that the signs $\sigma, \sigma_h = \pm1 $ of the arithmetic and harmonic means are assumed to be nonzero, respectively, whenever they appear. 
Specifically, this assumption will only arise under the following circumstances:
\begin{equation}
\begin{aligned}
& \sigma = \pm1 \quad & \textrm{for} & \ N \ \equiv 1-\beta \ \Mod 4,\\
& \sigma_h = \pm1 \quad &\textrm{for} & \ (-1)^{N_\mathrm{aut}}= \beta .\\
\end{aligned}
\label{eq:1.26}
\end{equation}

Concerning the open parameter interval $a \in (\underline{a}, \bar{a})$ where we will assert global Hopf bifurcation, we distinguish the following cases.
We fix
\begin{equation}
\begin{aligned}
& \underline{a} := 0 &\mathrm{for} &\ N \ \not \equiv 1-\beta\ \Mod 4,\\
& \underline{a} > 0 & \mathrm{for} & \ N  \ \equiv 1-\beta \ \Mod 4, \\
\end{aligned}
\label{eq:1.27}
\end{equation}
with arbitrarily small $\underline{a} $ in the second case.
Similarly, we fix
\begin{equation}
\begin{aligned}
& \bar{a} := + \infty &\mathrm{for} & \ (-1)^{N_{\mathrm{aut}}} = -\beta,\\
& \bar{a} < 1 & \mathrm{for} & \ (-1)^{N_{\mathrm{aut}}} = \beta, \\
\end{aligned}
\label{eq:1.28}
\end{equation}
with arbitrarily small $1-\bar{a}$ in the second case.

\begin{thm} \label{thm:1.2}
Consider a network \eqref{eq:1.10} with a fast nondegenerate $N$-cycle $\mathbf{A}$ on $\mathbf{m}= (1 \ \ldots\  N),\  N\geq 3$, of the Jacobian \eqref{eq:1.16} -- \eqref{eq:1.18} at the $(\varepsilon, a)$-independent steady state $x^*$ in \eqref{eq:1.15}.
Assume hyperbolicity of the lower right block $\mathbf{D}$ in the Jacobian \eqref{eq:1.16}, i.e. $0\, \not\in \, \mathrm{Re}\,  \mathrm{spec}\,  \mathbf{D}$.
Let the normalization \eqref{eq:1.19} and parameter assumptions \eqref{eq:1.20}, \eqref{eq:1.26} -- \eqref{eq:1.28}, on $a>0$ hold.

Then each of the cases \eqref{eq:1.22} -- \eqref{eq:1.25} leads to the following conclusion.
There exists $\varepsilon_0 >0$ depending on $\underline{a}, \bar{a}$ such that for any fixed $0 < \varepsilon < \varepsilon_0$ the network \eqref{eq:1.10} exhibits global Hopf bifurcation of nonstationary periodic solutions from the steady state $x^*$, for parameters $a \in (\underline{a}, \bar{a})$.
\end{thm}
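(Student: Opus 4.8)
The plan is to apply the global Hopf bifurcation criterion of corollary \ref{cor:2.4}, following \cite{Fie85}, to the stationary branch $x^*$ parametrized by $a \in (\underline a, \bar a)$. That criterion reduces global Hopf bifurcation to the nonvanishing of a crossing index computable from the purely imaginary eigenvalue crossings of the linearization $f_x(\varepsilon, a, x^*)$. First I would argue that, for small $\varepsilon$, these crossings are governed entirely by the fast cycle block $\mathbf A = \mathbf A(a)$ of \eqref{eq:1.16}, \eqref{eq:1.18}. Indeed, the spectrum of $f_x$ splits, as $\varepsilon \to 0$, into the $O(1)$ eigenvalues of $\mathbf A$ and the slow $O(\varepsilon)$ eigenvalues near $\varepsilon\,\mathrm{spec}\,\mathbf D$. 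Hyperbolicity $0 \notin \mathrm{Re}\,\mathrm{spec}\,\mathbf D$ keeps the slow eigenvalues off the imaginary axis and, since $\mathbf D$ does not depend on $a$, contributes a fixed, $a$-independent count of right-half-plane eigenvalues; this cancels from the net crossing number and enters the index at most as a constant parity factor. Hence the only $a$-dependent imaginary crossings come from $\mathbf A$, and a perturbation argument transfers each transverse crossing of $\mathbf A$ at parameter $a_0$ to a transverse crossing of the full Jacobian at $a_0 + O(\varepsilon)$ with unchanged direction. This is where $\varepsilon_0$ must be chosen uniformly in $a$ over the compact closure, depending on $\underline a, \bar a$.

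\textbf{Spectral analysis of the cyclic block.} Next I would compute the spectrum of $\mathbf A$ explicitly. The cyclic structure \eqref{eq:1.18} with the normalizations \eqref{eq:1.19}, \eqref{eq:1.20} gives the characteristic equation
\begin{equation}
\prod_{m=1}^N (\lambda + a\alpha_m) = \beta, \qquad \beta = \pm 1.
\label{eq:plan-char}
\end{equation}
As $a \to 0^+$ this degenerates to $\lambda^N = \beta$, whose roots are $N$-th roots of $\pm 1$; counting those with positive real part yields exactly $2\lfloor N/4\rfloor + 1$ right-half-plane eigenvalues for $\beta = +1$ (and $2\lfloor (N+2)/4\rfloor$ for $\beta = -1$) when $N \not\equiv 1-\beta \ \Mod 4$, and an additional conjugate pair on the imaginary axis when $N \equiv 1 - \beta \ \Mod 4$. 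As $a \to +\infty$ the eigenvalues approach the diagonal entries $-a\alpha_m$, leaving exactly $N_{\mathrm{aut}}$ eigenvalues in the right half-plane. Finally, $\lambda = 0$ occurs in \eqref{eq:plan-char} precisely when $a^N (-1)^{N_{\mathrm{aut}}} = \beta$, i.e. only at $a = 1$ and only when $(-1)^{N_{\mathrm{aut}}} = \beta$; this is a steady-state, not a Hopf, degeneracy, and is excluded from $(\underline a, \bar a)$ by the choice $\bar a < 1$ in \eqref{eq:1.28}. Thus every imaginary-axis crossing in the open interval is a genuine Hopf crossing with $\omega \neq 0$. The spectral theory of cyclic monotone feedback systems of \cite{M-PS90}, assembled in section \ref{sec3}, guarantees that these crossings are simple and transverse, so the crossing index is well defined.

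\textbf{Assembling the index.} The crossing directions at the two endpoints I would read off from \eqref{eq:plan-char} by implicit differentiation. On the axis pair at $a = 0$ one finds $d\lambda/da = -\langle\alpha\rangle$, so the sign $\sigma$ records whether that pair enters the right half-plane, shifting the starting count by $\sigma$; this is exactly the correction appearing in cases \eqref{eq:1.23}, \eqref{eq:1.25}, triggered when $N \equiv 1-\beta$. At the steady-state degeneracy $a = 1$ one finds $d\lambda/da = -\langle\alpha\rangle_h$, so the sign $\sigma_h$ records the direction in which the real eigenvalue leaves the right half-plane as $a \uparrow 1$, hence fixes the count just below $\bar a$; this is the alternative $= \sigma_h$ in \eqref{eq:1.22}--\eqref{eq:1.25}. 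Since all interior crossings are Hopf, the net change $\Delta E$ in the right-half-plane eigenvalue count across $(\underline a, \bar a)$ is even and equals twice the signed Hopf crossing number. Collecting the endpoint counts with the $\sigma, \sigma_h$ corrections, $\Delta E$ equals precisely the displayed integer of the relevant case (with an extra summand $+\sigma_h$ exactly when $\bar a < 1$). Each stated alternative — equality to $\sigma_h$, or absolute value exceeding $1$ — is then the arithmetic statement $\Delta E \neq 0$, whence corollary \ref{cor:2.4} yields global Hopf bifurcation.

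\textbf{Main obstacle.} The hard part I expect to be the boundary bookkeeping together with the uniform control of the $\varepsilon$-perturbation. The delicate points are: (i) proving that the slow block contributes only a constant parity factor and never an $a$-dependent imaginary crossing, uniformly in $a$, so that a single $\varepsilon_0(\underline a, \bar a)$ suffices; and (ii) pinning down the precise parity and sign conventions as $a \to \underline a^+$ and $a \to \bar a^-$ — in particular the interplay of $\sigma$ at the $N \equiv 1-\beta$ degeneracy with $\sigma_h$ at the $a = 1$ steady-state crossing — so that the four cases \eqref{eq:1.22}--\eqref{eq:1.25} really exhaust the nonvanishing of $\Delta E$. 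The simplicity and transversality furnished by \cite{M-PS90} in section \ref{sec3} are what make this bookkeeping tractable; without them the crossing index would not even be well defined.
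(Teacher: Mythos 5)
Your overall strategy coincides with the paper's: apply corollary \ref{cor:2.4} along the trivial branch, split $\mathrm{spec}\, f_x(\varepsilon,a,x^*)$ into a fast part near $\mathrm{spec}\,\mathbf{A}(a)$ and a slow part near $\varepsilon\,\mathrm{spec}\,\mathbf{D}$, and reduce everything to the difference of Morse indices of $\mathbf{A}$ at the two ends of $(\underline a,\bar a)$. However, your bookkeeping at the right endpoint has a genuine gap in the case $(-1)^{N_\mathrm{aut}}=\beta$, $\bar a<1$. The transverse crossing $\lambda_k'(1)=-\langle\alpha\rangle_h$ only relates $\mu(1^-)$ to $\mu(1^+)$, namely $\mu(1^-)=\mu(1^+)+\sigma_h$; it accounts for the single crossing eigenvalue but determines neither absolute count. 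To obtain the value $\mu(1^-)=N_\mathrm{aut}+\sigma_h$ that your $\Delta E$ formula needs, you must identify $\mu(1^+)$ with the limit $\mu(\infty)=N_\mathrm{aut}$ of \eqref{eq:3.18}, and that requires proving that $\mathbf{A}(a)$ has \emph{no purely imaginary eigenvalues at all for $a>1$} --- not merely no zero eigenvalues, which is all your characteristic-equation observation yields. This is precisely proposition \ref{prop:3.4} of the paper, and it is not routine: its proof uses the zero-number machinery of \cite{M-PS90} (propositions \ref{prop:3.1}, \ref{prop:3.2}: the kernel eigenvector at $a=1$ has zero number $N_\mathrm{aut}$, which places $\lambda=0$ at position $k\in\{N_\mathrm{aut}-1,\,N_\mathrm{aut}\}$ in the ordering) together with a straddling argument: the two adjacent simple real eigenvalues satisfy $\lambda_{2k'-1}>0>\lambda_{2k'}$ for all $a>1$, and the strict pairwise ordering \eqref{eq:3.4}, \eqref{eq:3.6} then blocks every other eigenvalue from reaching the imaginary axis. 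Without this ingredient, your identification of $\Delta E$ with ``displayed integer $+\,\sigma_h$'' is unjustified.

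A second flaw is structural. You assert that \cite{M-PS90} guarantees all interior crossings are simple \emph{and transverse}, and you build the $\varepsilon$-perturbation on transferring each transverse crossing individually. Neither is available: the theory of \cite{M-PS90} gives simplicity and the ordering, but nothing about transversality in the parameter $a$, and the paper explicitly allows nontransverse interior crossings, noting that the number $\bar n(\varepsilon)$ of Hopf points may fluctuate with $\varepsilon$. Isolation and finiteness of Hopf points follow instead from analyticity of $a\mapsto f_x(\varepsilon,a,x^*)$, and the $\varepsilon$-argument must be run through the endpoints only: by the spectral splitting \eqref{eq:4.15} (see \cite{Kato80}) and hyperbolicity of $\mathbf{D}$ one gets $\mu(\varepsilon,a)=\mu(0,a)$ at $a=\underline a,\bar a$, so the net crossing number \eqref{eq:4.17} is stable no matter how interior crossings rearrange. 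Note also that for $\bar a=+\infty$ your ``compact closure'' does not exist; uniformity of the splitting as $a\to\infty$ comes from the diagonal limit of $a^{-1}\mathcal{A}(\varepsilon,a)$. Finally, corollary \ref{cor:2.4} is applied not on $J\times\mathbb{R}^M$ but on $\mathcal{U}=(J\times\mathbb{R}^M)\setminus\mathcal{E}^c$, excising the nontrivial steady states as in \eqref{eq:4.5}; without this excision the finiteness, nondegeneracy and isolation hypotheses on Hopf points required by the corollary cannot be verified, since $f$ is essentially arbitrary away from $x^*$.
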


For our precise notion of global Hopf bifurcation we refer to definition \ref{def:2.2} and corollary \ref{cor:2.4} below.

Let us rewrite the four options \eqref{eq:1.22} -- \eqref{eq:1.25} in more concise form.
Specifically, we define the abbreviating signs
\begin{eqnarray}
\iota = \iota(N) &:=&
\begin{cases}
     +1 & \text{for}\ N\equiv 1 \\
     -1 & \text{for}\ N\equiv -1\\
     \phantom{+}0  & \text{for}\ N\equiv 0,2
\end{cases} 
\quad\ \ \Mod 4;
\label{eq:1.33}\\
\kappa = \kappa(N,\beta) &:=&
\begin{cases}
      \phantom{+}0 & \text{for}\ N\not\equiv 1-\beta \\
      \phantom{+}1 & \text{for}\ N\equiv 1-\beta
\end{cases} 
\quad \Mod 4.
\label{eq:1.34}
\end{eqnarray}
Note the isomorphism $\iota$ from the group $\mathbb{Z}_4^*$ of multiplicative units in $\mathbb{Z}_4$ to the multiplicative group $\mathbb{Z}_2=\{\pm 1\}$.
The four options \eqref{eq:1.22} -- \eqref{eq:1.25} can then be condensed into the single assumption
\begin{equation}
\begin{aligned}
\phantom{|} 2 N_\mathrm{aut} - N - \iota\beta + 2 \kappa\sigma \phantom{|} \ &=\ 2 \sigma_h\,, \quad \mathrm{or}\\
|2 N_\mathrm{aut} - N - \iota\beta + 2 \kappa\sigma | \ &> \ \ 2\,. \\
\end{aligned}
\label{eq:1.35}
\end{equation}

For a first consistency check of our result, consider time reversal $t\mapsto -t$ in our original ODE setting \eqref{eq:2.1}.
Stationary solutions, periodic orbits (as sets), and global Hopf bifurcation remain unaffected by time reversal. 
But how about the options \eqref{eq:1.22} -- \eqref{eq:1.25} of theorem \ref{thm:1.2}, alias assumption \eqref{eq:1.35}?

Under time reversal, all $\alpha_m\,,\, \beta_m$ reverse sign, and so do the signs $\sigma, \sigma_h$ of the arithmetic and harmonic means $\langle\alpha \rangle , \langle\alpha \rangle_h$\,.
Sign reversal of all $\alpha_m$ therefore replaces the count $N_\mathrm{aut}$ of strongly autocatalytic $\alpha_m<0$ in the $N$-cycle by $N-N_\mathrm{aut}$\,. 
This reverses the sign of the term $2 N_\mathrm{aut} - N$ in assumption \eqref{eq:1.35}.
The normalized feedback coefficient $\beta$ gets multiplied by $(-1)^N$.
By abbreviation \eqref{eq:1.33}, however, which eliminates even $N$, this also just reverses the sign of $\iota\beta$ in assumption \eqref{eq:1.35}.
The coefficient $\kappa = \kappa(N,\beta)$, in contrast, vanishes for odd $N$ and thus remains unaffected by time reversal.
In summary, these elementary considerations show how assumption \eqref{eq:1.35} remains invariant under time reversal -- just as the conclusion of global Hopf bifurcation in theorem \ref{thm:1.2} does.


\section{Global Hopf bifurcation}\label{sec2}
We introduce the main tools in our analysis of autonomous time periodic oscillations.
Skipping proofs, we adapt results on global Hopf bifurcation going back to \cite{Fie85}, and based on earlier results by Jim Yorke and others \cite{AY78, M-PY82, AllY84}.
Specifically, we introduce virtual periods, and the center index $\zhong$ (pronounced ``zhong").
Our main abstract results are summarized in theorem \ref{thm:2.3} and corollary \ref{cor:2.4} below. 
See also \cite{Fie88}, section 3, for a more detailed survey.

In this section we consider general vector fields
\begin{equation}
\dot{x} = f(a, x)
\label{eq:2.1}
\end{equation}
on $x \in \mathbb{R}^M$, with scalar parameter $a \in \mathbb{R}$, and continuous $f, f_x$.
As before, we call $x^*$ \emph{stationary} at $a$, or \emph{steady state}, if $f(a, x^*)= 0$.
We call a solution $x(t)$ \emph{periodic} with a \emph{period} $T>0$ if $x(t)$ is nonstationary and 
\begin{equation}
x(t+T)=x(t)
\label{eq:2.2}
\end{equation}
holds for all real $t$.
The set of all periods $T$ then takes the form $T=kp$, with $k \in \mathbb{N}$, where $p>0$ is called the \emph{minimal period} of $x(t)$.

Global bifurcation results are often based on topological tools which, in turn, are established via approximations and limit arguments.
For periodic solutions, a peculiar difficulty arises from the notion of minimal period $p$, versus periods $T=kp$: the limits of minimal periods may fail to be minimal periods of the limit.
At Hopf bifurcation, for example, we observe sequences of periodic orbits shrinking to a stationary solution, although the minimal periods approach a positive limit.
At period doubling, we observe sequences of periodic orbits where the  minimal period drops by a factor two, in the limit.
More generally, the same difficulty arises in systems with group equivariance: isotropies may increase (but not decrease) under limits.
This more encompassing equivariant viewpoint, which we do not pursue any further here, has been addressed in \cite{Fie88}, section 4.

The notion of virtual periods remedies the limit deficiency of minimal periods, by including the associated linear flow.
We call $q>0$ a \emph{virtual period} of $x(t)$ at $a$, if $q$ is the minimal period of some pair $(x(t),y(t))$, where $y(t)$ satisfies the (nonautonomous) linearized equation
\begin{equation}
\dot{y}(t)= f_x(a, x(t))y(t),
\label{eq:2.3}
\end{equation}
for all real $t$.
We also use this terminology if $x(t)\equiv x^*$ happens to be stationary.
In summary, virtual periods are the minimal periods of the induced flow on the tangent bundle.

Stationary $x^*$ with virtual periods $q$ are called \emph{Hopf points}: indeed they possess nonzero purely imaginary eigenvalues.
Standard Hopf bifurcation, from a transverse crossing of a pair of simple and nonresonant eigenvalues $\pm \mathrm{i}$ of $f_x(a, x^*)$, is indicative of a virtual period $q=2\pi$ at $x^*$.
Any subset of rationally related eigenvalues $\pm \mathrm{i} \omega$ generates a virtual period $q$ given by the least common multiple of the $2\pi/\omega$.
Standard period doubling at minimal period $p$ features two virtual periods $q$: $p$ itself, and $2p$.
Note how the set of virtual periods $q$ of any fixed stationary or periodic solution $x(t)$ is always bounded, whereas the set $T=kp$ of all periods is always unbounded.

It turns out that virtual periods, unlike minimal periods, are closed under limits.
\begin{prop}\label{prop:2.1}
Let $q_n$ be a virtual period of $x_n$ at parameter $a_n$.
Assume bounded convergence: 
\begin{equation}
(a_n, x_n, q_n)\rightarrow (a_\infty, x_\infty, q_\infty).
\label{eq:2.4}
\end{equation}
Then $q_\infty>0$, and $q_\infty$ is a virtual period of $x_\infty$ at parameter $a_\infty$.
\end{prop}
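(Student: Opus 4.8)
The plan is to read Proposition \ref{prop:2.1} as a closedness statement for minimal periods of the flow on the tangent bundle. First I would pass to the augmented autonomous system $\dot{x}=f(a,x)$, $\dot{y}=f_x(a,x)\,y$ on $\mathbb{R}^M\times\mathbb{R}^M$, so that, by definition, a virtual period of $x$ at $a$ is precisely the minimal period of a periodic orbit $(x(t),y(t))$ of this system whose first component is $x$. For each $n$ I would fix a realizing pair $(x_n,y_n)$ of minimal period $q_n$; since the $y$-equation is linear and homogeneous, I may normalize, say $\|y_n(0)\|=1$. (If the virtual period $q_n$ is only realized by $y_n\equiv 0$, it coincides with a genuine minimal period of $x_n$ itself, and the steps below only simplify.)

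Next I would establish the positive lower bound $q_\infty>0$. Because the data $(a_n,x_n(\cdot))$ converge and each $x_n$ is $q_n$-periodic with $q_n\to q_\infty<\infty$, all points $(a_n,x_n(t))$ lie in a fixed compact set, on which continuity of $f_x$ yields a uniform Lipschitz constant $L$ for $f(a_n,\cdot)$. A classical lower bound of Yorke type (cf.~\cite{AY78,AllY84}) then forces every nonconstant periodic solution of $\dot{x}=f(a_n,x)$ to have minimal period at least $2\pi/L$; in the stationary case $x_n\equiv x_n^*$ the same bound follows from $|\mathrm{Im}\,\mathrm{spec}\,f_x(a_n,x_n^*)|\le \|f_x\|\le L$ applied to the constant-coefficient $y$-equation. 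Since the minimal period $q_n$ of the pair is a positive integer multiple of the minimal period of $x_n$ (respectively equals that of $y_n$ when $x_n$ is stationary), I obtain $q_n\ge 2\pi/L$ uniformly, hence $q_\infty\ge 2\pi/L>0$.

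I would then extract a limit pair by compactness. On the bounded window $[0,Q]$, $Q:=\sup_n q_n<\infty$, Gronwall's inequality bounds $\|y_n\|$ uniformly and the $y$-equation bounds $\|\dot{y}_n\|$, giving equicontinuity; Arzel\`a--Ascoli then yields a subsequence with $y_n\to y_\infty$ in $C^1$. Using $x_n\to x_\infty$, $a_n\to a_\infty$ and continuity of $f_x$, the coefficient matrices $f_x(a_n,x_n(t))$ converge uniformly, so $y_\infty$ solves the limiting linearized equation \eqref{eq:2.3} and is nonzero by the normalization; passing $(x_n,y_n)(t+q_n)=(x_n,y_n)(t)$ to the limit shows that $q_\infty$ is a period of the pair $(x_\infty,y_\infty)$. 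The same tangent-bundle limit automatically covers the Hopf situation $x_\infty\equiv x^*$: the coefficients then converge to the constant matrix $f_x(a_\infty,x^*)$, and $y_\infty$ becomes a nonzero periodic solution of this constant-coefficient linearization, i.e.\ $x^*$ is a Hopf point.

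The remaining and genuinely delicate point is that $q_\infty$ must be the \emph{minimal} period of a pair over $x_\infty$, not merely a period, and I expect this to be the main obstacle. It is exactly here that plain minimal periods fail closedness: the naive limit pair $(x_\infty,y_\infty)$ may have strictly smaller minimal period $p=q_\infty/j$ with $j\ge 2$, as in period doubling, where pairs of minimal period $2p$ realized by $y_n\propto\dot{x}_n$ limit onto $(x_\infty,\dot{x}_\infty)$ of minimal period $p$. To repair this I would run a Floquet/monodromy argument exploiting the \emph{primitivity} of $q_n$: since $q_n$ is the minimal period of the pre-limit pair, the orbit $x_n$ wraps a primitive number of times before its first full return, and passing the monodromy over a window of length $\approx q_\infty$ to the limit forces the limiting monodromy of $x_\infty$ to acquire a root-of-unity Floquet multiplier whose Floquet eigensolution $w$ furnishes a pair $(x_\infty,w)$ of minimal period exactly $q_\infty$. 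The hard part is the spectral bookkeeping that controls how minimal periods and root-of-unity multipliers behave under $x_n\to x_\infty$ when the minimal period of $x_n$ need not converge to that of $x_\infty$, and in particular precludes an unnoticed collapse to a proper divisor of $q_\infty$; this is precisely the step where the tangent-bundle (virtual) formulation of \cite{Fie85,Fie88} converts the non-closed notion of minimal period into the closed notion demanded by the proposition.
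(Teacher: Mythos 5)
Your first three steps are sound: the tangent-bundle formulation, Yorke's $2\pi/L$ bound for $q_\infty>0$, and the Arzel\`a--Ascoli extraction of a limit pair $(x_\infty,y_\infty)$ with $y_\infty\not\equiv 0$ are all correct (and, for the record, the paper itself offers no proof of proposition \ref{prop:2.1}; it is quoted from \cite{Fie85}, with the notion and the closedness result originating in \cite{AllY84}). The genuine gap is exactly the step you flag and then defer. What your limit argument yields is that $q_\infty$ is \emph{a} period of \emph{one} pair, i.e.\ $\Phi_\infty(q_\infty)u_\infty=u_\infty$ for the fundamental matrix $\Phi_\infty$ of \eqref{eq:2.3} along $x_\infty$ and some unit vector $u_\infty$. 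That is strictly weaker than the assertion. Writing $q_\infty=kp_\infty$, where $p_\infty$ is the minimal period of a nonstationary $x_\infty$ and $M_\infty=\Phi_\infty(p_\infty)$ is its monodromy, the statement ``$q_\infty$ is a virtual period of $x_\infty$'' is equivalent to: $M_\infty$ possesses root-of-unity eigenvalues the least common multiple of whose orders equals $k$. An eigenvector of $M_\infty^k=\Phi_\infty(q_\infty)$ with eigenvalue $1$ does not imply this: if $M_\infty=I$, then $\Phi_\infty(q_\infty)=I$ has eigenvalue $1$ for every $k$, yet every pair over $x_\infty$ has minimal period exactly $p_\infty$, so $p_\infty$ is the only virtual period. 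Hence your concluding mechanism --- passing the monodromy relations $\Phi_n(q_n)u_n=u_n$ to the limit to ``force a root-of-unity multiplier'' --- is not merely unexecuted; in that form it provably cannot deliver the conclusion.

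What is actually required, and missing, is an argument that converts the \emph{pre-limit minimality} of $q_n$ into spectral information about $M_\infty$ itself, in the presence of the obstruction you correctly name: the minimal periods $p_n$ of $x_n$ may converge to a multiple $lp_\infty$, $l\geq 2$. One has to work on a transversal section $\Sigma$ at $z=x_\infty(0)$, where the orbit of $x_n$ produces an $l$-cycle $\zeta_n^{(0)},\dots,\zeta_n^{(l-1)}\to z$ of the Poincar\'e maps $P_n\to P_\infty$, and extract \emph{primitive} root-of-unity eigenvalues of $DP_\infty(z)$ by a mean-value type argument on the permuted differences: already for $l=2$, the relation $P_n(\zeta_n)-P_n(\zeta_n')=-(\zeta_n-\zeta_n')$ shows that $\int_0^1 DP_n\bigl(\zeta_n'+s(\zeta_n-\zeta_n')\bigr)\,ds$ has eigenvalue $-1$, and this eigenvalue survives the limit, whereas no argument at the level of $\Phi_n(q_n)$ sees it. For general $l$, and for the integer factor $k_n=q_n/p_n$ acting on the $y$-component, this must be combined with the lcm bookkeeping over several multipliers; and the case of $x_\infty$ stationary with $x_n$ nonstationary (orbits collapsing onto a Hopf point) needs a separate treatment, since no section exists at $x_\infty$ and one must produce purely imaginary eigenvalues $2\pi\mathrm{i}\,m_j/q_\infty$ of $f_x(a_\infty,x_\infty)$ with $\gcd(m_j)=1$, not merely one such eigenvalue. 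This is precisely the content of the closedness lemma of \cite{AllY84,Fie85}; your proposal names the right vocabulary and the right destination, but the decisive argument is absent, and invoking \cite{Fie85} for that step is circular, since it is the very statement to be proven.
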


Henceforth we require all Hopf points $(a_*, x^*)$ of \eqref{prop:2.1} to be nondegenerate, i.e.
\begin{equation}
\det\, f_x (a_*, x^*)\neq 0.
\label{eq:2.5}
\end{equation}
This allows us to continue the steady state $x^*= x^* (a)$, locally, by the implicit function theorem.
Let
\begin{equation}
\mu (a) :=  \# \{\mathrm{ Re \,spec} \, f_x(a, x^*(a))>0\}
\label{eq:2.6}
\end{equation}
count the strictly unstable eigenvalues at $(a, x^*(a))$, with algebraic multiplicity.
We now require Hopf points to be isolated, in $\mathbb{R}\times \mathbb{R}^M$.
Then $\mu(a)$ is the unstable dimension, or Morse index, of the hyperbolic steady state $x^*(a)$, for nearby $a \neq a_*\ $.
This allows us to define the \emph{crossing number}
\begin{equation}
\chi(a_*):= \tfrac{1}{2} \lim_{\delta\searrow 0} \ (\mu (a_* + \delta)- \mu (a_* - \delta))= \tfrac{1}{2}(\mu (a_*^+)- \mu (a_*^-))
\label{eq:2.7}
\end{equation}
of the Hopf point $x^*$ at $a=a_*$\,.
This is the net number of eigenvalue pairs crossing the imaginary axis from left to right, as $a$ increases through $a_*$\,.
Finally, following \cite{M-PY82}, we define the \emph{center index} of the Hopf point $(a_*, x^*)$ as
\begin{equation}
\zhong (a_*, x^*):= (-1)^{\mu(a_*)}\cdot \chi (a_*)
\label{eq:2.8}
\end{equation}
Fix any open subset $\mathcal{U}$ of $\mathbb{R} \times \mathbb{R}^M$, such that $\mathcal{U}$ contains the whole nonstationary periodic orbit, with any point on it.
We clarify our notion of global versus local continua of periodic solutions and Hopf points in $\mathcal{U}$ as follows.
Denote
\begin{equation}
\begin{aligned}
& \mathcal{Q}:= \{(a, x, q) \,|\, q>0 \mathrm{ \ is \ a\ virtual\ period\ of \ } (a,x) \in \mathcal{U} \} \,,\\
& \mathcal{P} := \{(a,x) \,|\, (a,x,q)\in \mathcal{Q} \}\,.\\
\end{aligned}
\label{eq:2.9}
\end{equation}
In other words, $\mathcal{P} = \check{q} \tilde{Q}  $, where the projection $\check{q}$ omits the $q$-component of $\mathcal{Q}$.
\begin{defi}\label{def:2.2}
A connected component $\mathcal{C}$ of $\mathcal{P}$, i.e. of the periodic solutions and Hopf points in $\mathcal{U}$, is called \emph{local} in $\mathcal{U}$, if the closure of $\mathcal{C}$ is compactly contained in $\mathcal{U}$ and the virtual periods in $\mathcal{C}$ are bounded above. 
In other words, the lift $\check{q}^{-1} \mathcal{C}$ is compactly contained in $\mathcal{Q}$.
Connected components $\mathcal{C}$ which are not local are called \emph{global}.
\end{defi}
Note that proposition \ref{prop:2.1} asserts compactness of local components $\mathcal{C}$.
\begin{thm} \label{thm:2.3}
Consider the flow \eqref{eq:2.1} and assume all Hopf points in $\mathcal{U}$ are nondegenerate, as in \eqref{eq:2.5}, and isolated.
Let $\mathcal{C}$ be a local connected component of the periodic solutions and Hopf points $\mathcal{P}$ in $\mathcal{U}$.
Then
\begin{equation}
\sum_\mathcal{C} \, \zhong=0, \\
\label{eq:2.10}
\end{equation}
where the sum ranges over the finitely many Hopf points in $\mathcal{C}$, if any.
\end{thm}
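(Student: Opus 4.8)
The plan is to recognize this as the degree-theoretic global bifurcation alternative of Rabinowitz type, in which the center index $\zhong$ plays the role of the local jump of a degree, and to read off $\sum_\mathcal{C}\zhong$ as the fixed-isotropy part of an $S^1$-equivariant degree that vanishes on a compact isolated component by homotopy invariance. This is exactly the machinery of \cite{Fie85, M-PY82}, so I only sketch how the present compactness hypotheses feed into it. Concretely, I would pass to the Banach space of $2\pi$-periodic loops, rescale time by the (virtual) period so that a triple $(a,x,q)\in\mathcal{Q}$ corresponds to a zero of an operator $\mathcal{G}(a,x,q)=\dot{x}-\tfrac{q}{2\pi}f(a,x)$, with the circle $S^1$ acting by time shift. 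Nonstationary periodic solutions then appear as \emph{free} $S^1$-orbits of zeros, stationary $x^*$ as $S^1$-\emph{fixed} points, and Hopf points as precisely those fixed points at which free orbits of nearby zeros collapse at bounded virtual period. The content of \cite{Fie85} is that the center index $\zhong(a_*,x^*)$ of \eqref{eq:2.8}, assembled from the Morse index $\mu$ and the crossing number $\chi$, equals the local fixed-isotropy contribution of $(a_*,x^*)$ to an $S^1$-equivariant degree of $\mathcal{G}$.

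Next I would use the hypotheses to produce an isolating neighborhood. By Definition \ref{def:2.2}, locality of $\mathcal{C}$ means the lift $\check{q}^{-1}\mathcal{C}$ is compactly contained in $\mathcal{Q}$, and Proposition \ref{prop:2.1} guarantees that bounded limits of virtual periods are again positive virtual periods, so $\overline{\check{q}^{-1}\mathcal{C}}$ is compact and isolated, with virtual periods bounded away from $0$ and $\infty$. Hence I can choose a bounded open neighborhood $\mathcal{V}$ of this compact set, compactly contained in $\mathcal{Q}$, whose boundary carries no zeros of $\mathcal{G}$, i.e.\ neither periodic solutions nor Hopf points of $\mathcal{P}$. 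On such a $\mathcal{V}$ the $S^1$-equivariant degree $\deg_{S^1}(\mathcal{G},\mathcal{V})$ is well defined and is a homotopy invariant under admissible homotopies fixing $\partial\mathcal{V}$.

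Finally I would compute this degree in two ways and match them. By additivity and excision, its fixed-isotropy part collects exactly the local fixed contributions over the zeros in $\mathcal{V}$; the free $S^1$-orbits of nonstationary periodic solutions feed only the free part, so the fixed part equals $\sum_\mathcal{C}\zhong$ over the Hopf points. To see this fixed part vanishes, I use that along the stationary branch $\mu(a)$ is locally constant away from Hopf points and jumps by $2\chi(a_*)$ at each of them, with $\zhong=(-1)^\mu\chi$ recording the jump with the correct orientation $(-1)^\mu$. Since the virtual periods stay bounded on $\overline{\check{q}^{-1}\mathcal{C}}$, there is no period blow-up, and I can construct an admissible homotopy supported in $\mathcal{V}$ that collapses all nonstationary solutions onto the stationary branch. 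The enclosed Hopf points are then the crossings belonging to one compact excursion of the connected component $\mathcal{C}$, so their oriented contributions telescope; the fixed part of $\deg_{S^1}(\mathcal{G},\mathcal{V})$ is therefore zero, whence $\sum_\mathcal{C}\zhong=0$.

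The main obstacle, and the step genuinely requiring the apparatus of \cite{Fie85}, is the identification of the combinatorial center index $\zhong$ with the fixed-isotropy component of the equivariant degree, together with the verification that nonstationary periodic orbits carry no fixed-isotropy degree and that Proposition \ref{prop:2.1} truly supplies the compactness making both the degree and the collapsing homotopy admissible. Ruling out escape of virtual periods to $0$ or $\infty$ is what converts the abstract global alternative into the clean telescoping statement; once this is secured, the vanishing of the sum is the standard homotopy-invariance argument.
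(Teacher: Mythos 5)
Your proposal rests on the $S^1$-equivariant degree formulation in loop space, and this is exactly where it breaks down. A zero of your operator $\mathcal{G}(a,x,q)=\dot{x}-\tfrac{q}{2\pi}f(a,x)$ with nonconstant $x$ is a pair $(a,x)$ together with a \emph{period} $q$ of $x$ --- any integer multiple $kp$ of the minimal period --- not a \emph{virtual period} in the sense of \eqref{eq:2.3}, which is the minimal period of a pair $(x,y)$ on the tangent bundle. So the zero set of $\mathcal{G}$ over $\mathcal{C}$ is not the lift $\check{q}^{-1}\mathcal{C}\subseteq\mathcal{Q}$ of definition \ref{def:2.2}: over every single nonstationary periodic orbit it already contains the unbounded sequence $q=kp$, $k\in\mathbb{N}$, and over the stationary branch it contains the whole ray $q>0$. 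Consequently, locality of $\mathcal{C}$ --- compactness of the \emph{virtual-period} lift --- does not produce any bounded isolating neighborhood $\mathcal{V}$ in loop space, so neither $\deg_{S^1}(\mathcal{G},\mathcal{V})$ nor your collapsing homotopy is admissible, and the argument cannot start. This is not a technicality: the jug-handle example of \cite{AllY84}, discussed right after corollary \ref{cor:2.4}, is a component which is local in the sense of definition \ref{def:2.2} (virtual periods bounded), yet whose continuum of period-triples $(a,x,T)$ is connected and unbounded, with $T$ climbing through the multiples $2^kp$ across the period doubling; no cut-off in $q$ avoids zeros on the boundary. The $S^1$-degree machinery of \cite{AY78, IV03} therefore proves a statement with a genuinely different, period-based notion of locality, and that statement does not imply theorem \ref{thm:2.3}; the paper cites it as a \emph{competing} approach precisely to mark this distinction.

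The paper's actual proof is by generic approximation, not by equivariant degree. One perturbs $f$ to a parametrized Kupka--Smale vector field (Thom transversality), for which $\mathcal{P}$ consists of one-dimensional generic families (the ``snakes'' of \cite{M-PY82}) whose only discontinuities of minimal period are period doublings; the cancellation \eqref{eq:2.10} in the generic case is the orientation and degree bookkeeping of \cite{M-PY82}, which must carefully distinguish periodic orbits with orientable and nonorientable unstable manifolds --- the step your ``telescoping'' remark glosses over. One then passes to the nongeneric limit, using proposition \ref{prop:2.1} (virtual periods are closed under bounded limits) to keep the compact component isolated, with factor-$2$ jumps at period doublings as the only possible virtual-period discontinuities. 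If you insist on a degree-theoretic route, you would have to build the linearized flow into the functional-analytic setting, so that virtual periods become genuine minimal periods of the object whose zeros you count; accomplishing this is, in effect, the content of \cite{Fie85}, and it is not obtained by the loop-space operator you wrote down.
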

\begin{cor}\label{cor:2.4}
In the setting of theorem \ref{thm:2.3}, assume $\mathcal{P}$ contains only finitely many Hopf points and
\begin{equation}
\sum_\mathcal{P} \, \zhong \neq 0. \\
\label{eq:2.11}
\end{equation}
Then $\mathcal{P}$ possesses at least one global connected component $\mathcal{C}$ which also contains a Hopf point.
We call this case \emph{global Hopf bifurcation in} $\mathcal{U}$.
\end{cor}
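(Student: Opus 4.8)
The plan is to derive Corollary \ref{cor:2.4} as a short contradiction argument built directly on Theorem \ref{thm:2.3}, which already carries all the topological content. First I would suppose, for contradiction, that no global connected component of $\mathcal{P}$ contains a Hopf point; equivalently, that every connected component which does contain a Hopf point is local, in the sense of Definition \ref{def:2.2}.

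Next I would organize the finitely many Hopf points of $\mathcal{P}$ by the connected component in which they lie. Since connected components partition $\mathcal{P}$, each Hopf point belongs to exactly one component, and since there are only finitely many Hopf points by hypothesis, only finitely many components $\mathcal{C}_1, \ldots, \mathcal{C}_k$ contain a Hopf point at all. Because the center index $\zhong$ is supported solely on Hopf points, components without any Hopf point contribute nothing to the total. This lets me rewrite the global sum as the finite decomposition
\begin{equation*}
\sum_\mathcal{P} \zhong \ = \ \sum_{i=1}^k \Big( \sum_{\mathcal{C}_i} \zhong \Big),
\end{equation*}
where each inner sum ranges over the finitely many Hopf points of $\mathcal{C}_i$.

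Under the contradiction hypothesis each $\mathcal{C}_i$ is local, so Theorem \ref{thm:2.3} applies and forces $\sum_{\mathcal{C}_i} \zhong = 0$ for every $i$. Summing over $i$ then yields $\sum_\mathcal{P} \zhong = 0$, in direct contradiction with the standing assumption \eqref{eq:2.11}. Hence at least one of the components $\mathcal{C}_i$ carrying a Hopf point must fail to be local, i.e. must be global; by construction this component also contains a Hopf point, which is exactly the assertion.

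I do not anticipate a genuine analytic obstacle here: all the difficulty — compactness of local continua via virtual periods (Proposition \ref{prop:2.1}) and the degree-theoretic vanishing of $\sum_\mathcal{C} \zhong$ over local components (Theorem \ref{thm:2.3}) — has already been absorbed into the cited results. The only point requiring genuine care is the bookkeeping in the displayed decomposition: one must check that finiteness of the Hopf-point set makes this a bona fide finite sum of component contributions, and that components devoid of Hopf points may be discarded without affecting $\sum_\mathcal{P} \zhong$. Both are immediate once one recalls that $\zhong$ is defined only at Hopf points, so there is no hidden contribution from the merely periodic, non-Hopf part of any component.
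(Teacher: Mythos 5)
Your proof is correct and follows essentially the same route as the paper: enumerate the finitely many components of $\mathcal{P}$ containing Hopf points, assume indirectly that all are local, apply theorem \ref{thm:2.3} to each, and contradict \eqref{eq:2.11} via the decomposition \eqref{eq:2.12}. The bookkeeping points you flag (finiteness of the decomposition, discarding Hopf-free components) are exactly the implicit content of the paper's argument, so there is nothing missing.
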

To derive the corollary from the theorem, let $\mathcal{C_\ell}$ enumerate the finitely many disjoint connected components of $\mathcal{P}$, which contain Hopf points.
Suppose, indirectly, that each $\mathcal{C_\ell}$ is local.
Then \eqref{eq:2.10} implies
\begin{equation}
\sum_{\mathcal{P}}\, \zhong = \sum_\ell \sum_{\mathcal{C_\ell}}\, \zhong = 0,
\label{eq:2.12}
\end{equation}
contradicting \eqref{eq:2.11}.
Hence at least one $\mathcal{C}_{\ell}$ is global, by theorem \ref{thm:2.3}.

The proof of theorem \ref{thm:2.3} is based on generic approximation.
The cancellation \eqref{eq:2.10} of center indices on compact connected components $\mathcal{C}$ follows from the same property in the generic situation, by approximation. 
See \cite{M-PY82} for the generic case.
This requires a parametrized version of the Kupka-Smale theorem, via Thom-transversality, and a detailed degree argument which carefully distinguishes periodic orbits with orientable and nonorientable unstable manifolds.
The resulting global Hopf components of orientable periodic orbits in $\mathcal{P}$ were called \emph{snakes}, in \cite{M-PY82}.
The only discontinuities of minimal periods, in the generic case of snakes, occur at period doubling bifurcations.
By generic approximation, this reveals jumps by factors 2 as the only possible discontinuities of virtual periods, in the general case of nongeneric snake limits.

Competing topological results, based on the $J$-homomorphism or $S^1$-equivariant degree theory, studied continua of triples $(a, x, T)$ with (not necessarily minimal) periods $T$ of $(a, x)$ as in \eqref{def:2.2}; see \cite{AY78} for the original, and \cite{IV03} for more recent developments with many references.
The intriguing ``jug-handle" example by \cite{AllY84} exhibits a continuum with bounded $(a,x)$ and unbounded $T$, whereas virtual periods remain bounded.
The jug-handle consists of a compact loop of periodic orbits $(a,x)$, where the two branches generated at a saddle-node bifurcation re-unite, at a period-doubling.
Any such loop generates an unbounded continuum of triples $(a,x,T)$ where $T$ traverses all multiples $2^kp$ of the minimal periods $p$, for $k \in \mathbb{N}_0$.
The jug handle, which is evidently quite bounded in parameter and phase space, therefore counts as a global, unbounded continuum in the sense of \cite{AY78, IV03}.
The virtual periods on the jug handle, however, remain bounded: they are given by $p$ and, at the period doubling only, by $\{p,2p\}$.
The same jug handle therefore does not qualify as global, in the sense of \cite{M-PY82, Fie85} which we adopt in the present paper.

Of course our notion of ``globality" depends on the choice of the underlying open domain $\mathcal{U}\subseteq \mathbb{R} \times \mathbb{R}^M$ where we study our continua.
Indeed we can only assert a \emph{global trichotomy} for any global component $\mathcal{C}\subseteq \mathcal{U}$:\newline
\begin{minipage}[t]{.1\textwidth}
\raggedright
\vspace{0.85cm}
\begin{equation}\label{eq:2.15}
\;
\end{equation}
\end{minipage}
\begin{minipage}[t]{.9\textwidth}
\raggedleft
\begin{enumerate}[label=(\roman*)]
\item either $\mathcal{C}$ is unbounded, or
\item $\mathcal{C}$ is bounded, but $\partial \mathcal{C}\cap \partial \mathcal{U} \neq \emptyset $, or else 
\item $\mathrm{clos} \ \mathcal{C}$ is compactly contained in $\mathcal{U}$ with unbounded virtual periods.
\end{enumerate}
\end{minipage}

\medskip
Option (iii) of the global trichotomy \eqref{eq:2.15} is particularly interesting.
For example, consider a convergent sequence $(a_n,x_n)\rightarrow (a_\infty,x_\infty)$ of Hopf points with purely imaginary eigenvalues $\pm i\omega_n$, such that $\omega_n\searrow 0$.
The steady state $x_\infty$ then features an eigenvalue $\omega_\infty=0$ with algebraic multiplicity at least two; in the simplest interesting case this is a Bogdanov-Takens point.
The virtual periods $q_n:= 2 \pi / \omega_n$ converge to $+ \infty$, of course.

More generally, suppose $(a_n, x_n)$ are nonstationary periodic with minimal periods $p_n\rightarrow +\infty$.
Suppose $(a_n,x_n)\rightarrow (a_\infty , x_\infty)$ becomes stationary, in the limit, but some part of the periodic orbit $x_n(t)$ of $x_n$ does not converge to $x_\infty$.
In the simplest interesting case this may happen by convergence of $x_n(.)$ to a homoclinic orbit attached to the steady state $x_\infty \,$.
This example is closely related to the Takens-Bogdanov case, which generates small amplitude homoclinic orbits.
For global consequences in vector fields with two real parameters see \cite{Fie86,Fie96}.

Suppose next that the orbits $x_n(.)$ remain bounded, and stay away from any steady states.
Remarkably $p_n\rightarrow \infty$ can still occur, along a continuum of periodic orbits and without any bifurcations affecting the minimal periods $p_n$.
In 1995, such \emph{blue sky catastrophes} have first been constructed by Turaev and Shilnikov, in a structurally stable way involving a single parameter.
See the survey \cite{SST14}.

In section 4, we will apply corollary \ref{cor:2.4} to the situation of theorem \ref{thm:1.2}.
In particular we note how the crossing numbers in \eqref{eq:2.11} simply add up to a net crossing number, along a steady state $x^*$ which does not actually depend on $a$, as long as $x^*$ remains nondegenerate.
To account for the slow-fast dichotomy \eqref{eq:1.16} of the linearization $f_x(\varepsilon, a, x^*)$ we will also narrow attention from $a \in (-\infty, +\infty)$ to $0<a \in (\underline{a}, \bar{a})$.


\section{Linear feedback cycles}\label{sec3}
In this section we collect some spectral properties of the normalized nondegenerate $N$-cycle 
\begin{equation}
\mathbf{A} = \mathbf{A}(a) = 
\left(
\begin{array}{cccc}
-a \alpha_1&&&\beta\\
1 &-a\alpha_2&&\\
&\ddots&\ddots&\\
&&1&-a\alpha_N
\end{array}
\right),
\label{eq:3.1}
\end{equation}
with $a>0,\ \prod \alpha_m=(-1)^{N_{\mathrm{aut}}}\,$, and $\beta= \pm1$.
See \eqref{eq:1.16} -- \eqref{eq:1.21} and theorem \ref{thm:1.2}.

Proposition \ref{prop:3.1} recalls the general pairwise ordering of the eigenvalues $\lambda_k$ of $\mathbf{A}$ by their real parts, due to \cite{M-PS90}.
Proposition \ref{prop:3.2} addresses crossings of eigenvalues through the imaginary axis, at $a=0$ and $a=1$.
Proposition \ref{prop:3.3} collects the limits, at $a=0^+, 1^\pm$, and $\infty$, of the strict unstable dimensions $\mu (a)$ introduced in \eqref{eq:1.21}:
\begin{equation}
\mu(a_0^\pm):= \lim \mu (a), \quad \mathrm{for} \ 0< \pm (a-a_0)\searrow 0.
\label{eq:3.2}
\end{equation}
We conclude in proposition \ref{prop:3.4}, by showing how  the presence of a zero eigenvalue, at $a=1$, prevents all further purely imaginary eigenvalues to occur for any $a \geq 1$.

The \emph{zero number} $z(\xi)$, an integer-valued Lyapunov function for $\dot{\xi}= \mathbf{A}\xi$, is the crucial tool in the deep spectral (and nonlinear) analysis of \cite{M-PS90}.
In our normalization \eqref{eq:3.1}, consider positive feedback $\beta=+1$ first and let $0 \neq \xi \in \mathbb{R}^N$.
Then $z(\xi)\geq 0$ denotes the (even) number of strict sign changes in the ordered cyclic sequence of $\xi$-components $\xi_m$, with $m \,\Mod N$.
For negative feedback $\beta = -1$, however, we modify that count between $\xi_N$ and $\xi_1$, only, to account for a strict sign change between $\beta \xi_N= -\xi_N$ and $\xi_1$, instead.
In particular $z(\xi)\geq 1$ becomes odd.
In summary we obtain
\begin{equation}
(-1)^{z(\xi)} = \beta,
\label{eq:3.3}
\end{equation}
for both feedback cases, $\beta = \pm1$.

\begin{prop}\label{prop:3.1}
Assume negative feedback, $\beta= -1$.
Then the eigenvalues $\lambda_k$ of $\mathbf{A}$ can be ordered in pairs, repeated with algebraic multiplicity, such that
\begin{equation}
\mathrm{Re}\lambda_0\geq \mathrm{Re}\lambda_1> \mathrm{Re}\lambda_2 \geq \mathrm{Re}\lambda_3> \ldots\,,
\label{eq:3.4}
\end{equation}
for indices ranging from $0$ to $N-1$.
Simple real eigenvalues are labeled in strictly decreasing order.
In particular, all eigenvalues are algebraically simple, except for some possibly double real eigenvalues.

The associated real eigenvectors $\xi_k$ of $\lambda_k$ can be chosen to satisfy
\begin{equation}
z(\xi_{2k})=z(\xi_{2k+1})= 2k+1,
\label{eq:3.5}
\end{equation}

Here $\xi_{2k}$ and  $\xi_{2k+1}$ refer to the real and imaginary parts of the complex eigenvectors, in case $\lambda_{2k+1}= \bar{\lambda}_{2k}$ are nonreal conjugate complex.

For positive feedback $\beta = +1$, the analogous ordering reads 
\begin{equation}
\lambda_0 > \mathrm{Re} \lambda_1\geq \mathrm{Re} \lambda_2 > \mathrm{Re} \lambda_3 \geq \mathrm{Re} \lambda_4 > \ldots \  .
\label{eq:3.6}
\end{equation}
with real eigenvectors $\xi_k$ of $\lambda_k$ satisfying
\begin{equation}
z(\xi_{2k-1})=z(\xi_{2k})= 2k,
\label{eq:3.7}
\end{equation}
for resulting indices in $\{0,\ldots,N-1\}$.
\end{prop}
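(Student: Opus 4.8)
First I would reduce the spectral problem to a single scalar equation. Expanding the cyclic almost-bidiagonal determinant $\det(\lambda I - \mathbf{A})$, only the identity permutation and the one full $N$-cycle contribute, yielding
\begin{equation}
\det(\lambda I - \mathbf{A}) = \prod_{m=1}^N (\lambda + a\alpha_m) - \beta,
\label{eq:chareq}
\end{equation}
so that the spectrum is exactly the zero set of $\prod_{m=1}^N(\lambda + a\alpha_m) = \beta$. For positive feedback $\beta = +1$ the shifted matrix $\mathbf{A} + cI$ is entrywise nonnegative and irreducible for large $c>0$ (nonnegative off-diagonal, cyclically connected), so Perron--Frobenius immediately supplies the simple, strictly real-part-dominant eigenvalue $\lambda_0$ of \eqref{eq:3.6}, carrying a positive eigenvector and hence $z(\xi_0)=0$. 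For negative feedback $\beta=-1$ no such leading real root is available, and all eigenvalues must pair; this case rests entirely on the zero number. The remaining task in both cases is to order the other roots and to read off the zero numbers of their eigenvectors.

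Second, I would bring in the discrete Lyapunov function $z$ of \cite{M-PS90}. The decisive input is that $z$ is non-increasing along every solution of $\dot\xi = \mathbf{A}\xi$, that it takes only finitely many integer values of the parity fixed by $\beta$ through \eqref{eq:3.3}, and that its level sets are organized by the real invariant subspaces spanned by generalized eigenvectors ordered by decreasing real part. Writing $V_j$ for the sum of the generalized eigenspaces of the $j$ leading eigenvalues, the Mallet--Paret--Smith theory guarantees that $z$ is constant on $V_j\setminus V_{j-1}$ and that this constant can only increase with $j$, dropping strictly precisely when one passes from one real-part level to a strictly smaller one. This is what forces the pairing structure and the alternation of strict and non-strict inequalities in \eqref{eq:3.4} and \eqref{eq:3.6}.

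Third, I would pin down the actual values by an argument count on \eqref{eq:chareq}. For $\lambda$ in the open upper half-plane each factor $\lambda + a\alpha_m$ has argument in $(0,\pi)$, so $\sum_m \arg(\lambda + a\alpha_m)\in(0,N\pi)$; reality of $\beta$ then forces this sum to equal $z\pi$ with $z$ even for $\beta=+1$ and odd for $\beta=-1$, and indeed $\mathrm{sign}\,\beta = (-1)^z$, consistent with \eqref{eq:3.3}. A winding/argument-principle count along vertical lines shows that for each admissible $z$ of the correct parity, $0\leq z\leq N-1$, there is exactly one eigenvalue in the upper half-plane, that its real part decreases as $z$ increases, and that the associated eigenvector --- generated by the recursion $\xi_{m-1}=(\lambda + a\alpha_m)\xi_m$ forced by $\mathbf{A}\xi=\lambda\xi$ --- has precisely $z$ cyclic sign changes. (In the constant-coefficient test case $\alpha_m\equiv\alpha$ one checks this explicitly: $\lambda+a\alpha=e^{\mathrm{i}\pi(2l+\mathbb{1}_{\beta=-1})/N}$, $\xi_m\propto\cos(\pi(2l+\mathbb{1}_{\beta=-1})m/N)$, with real part decreasing and sign-change count increasing in $l$.) Pairing each such eigenvalue with its conjugate (equal real part, whence the ``$\geq$'' within a pair) and appending the double real eigenvalues where a conjugate pair collides on the axis reproduces \eqref{eq:3.5} and \eqref{eq:3.7}.

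The hard part will be the rigorous matching in the third step: identifying the continuous argument count $z\pi=\sum_m\arg(\lambda + a\alpha_m)$ with the combinatorial sign-change count of the recursively defined eigenvector, together with the correct bookkeeping of strict versus non-strict inequalities. In particular, the degenerate ``$\geq$'' cases, where a conjugate pair degenerates into a double real eigenvalue, require verifying that the zero number is still shared across the pair and that no jump by $2$ in $z$ is ever skipped --- exactly the delicate monotonicity-at-collisions analysis that forms the core of the Mallet--Paret--Smith spectral theorem, and which I would import from \cite{M-PS90} rather than re-derive.
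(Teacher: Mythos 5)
Your proposal is correct and takes essentially the same route as the paper: the paper's entire proof of proposition \ref{prop:3.1} is the citation ``See \cite{M-PS90}'', and your argument likewise rests on importing the Mallet--Paret--Smith zero-number spectral theory for exactly the delicate pairing and collision analysis you flag at the end. Your added scaffolding is sound and consistent with the paper --- the characteristic polynomial you derive is the paper's own \eqref{eq:3.8}, and the Perron--Frobenius identification of the simple dominant eigenvalue $\lambda_0$ with positive eigenvector for $\beta=+1$ is a correct supplement --- but it does not constitute an independent proof of the full ordering \eqref{eq:3.4}--\eqref{eq:3.7}.
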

\begin{proof}[\textbf{Proof.}]
See \cite{M-PS90}.
\end{proof}
To get slightly more specific we write the characteristic equation for the characteristic polynomial $\mathbf{p}$ of $\mathbf{A}$ from \eqref{prop:3.1} as 
\begin{equation}
\begin{aligned}
0 = \mathbf{p} &= \det(\lambda-\mathbf{A}(a))= \prod_{m=1}^N(\lambda+ a\alpha_m)-\beta = \\
&= \lambda^N + \langle \alpha \rangle N a\lambda^{N-1}+\ldots+
(-1)^{N_{\mathrm{aut}}} \langle 1/ \alpha \rangle N a^{N-1} \lambda + (-1)^{N_{\mathrm{aut}}} a^N- \beta \,.
\end{aligned}
\label{eq:3.8}
\end{equation}
Here we have used the normalizations \eqref{eq:1.20}, and the notation \eqref{eq:1.21} for arithmetic means.
The case $a=0$ in \eqref{eq:3.8}, with the $N$-th roots of unity $\lambda_k^N = \beta = \pm1,\ k=0, \ldots, N-1$, as simple eigenvalues, provides an instructive example for the two feedback cases of proposition \ref{prop:3.1}.
\begin{prop}\label{prop:3.2}
Consider the normalized nondegenerate $N$-cycle $\mathbf{A}(a)$ of \eqref{eq:3.1}, for $a\geq 0$.
Then the following holds true.
\begin{enumerate}[label=(\roman*)]
\item An eigenvalue $\lambda_k=0$ occurs if, and only if,
\begin{equation}
(-1)^{N_{\mathrm{aut}}}= \beta \quad \mathrm{and} \quad a=1.
\label{eq:3.9}
\end{equation}
The eigenvector $\xi_k$ of $\lambda_k=0$ satisfies
\begin{equation}
z(\xi_k)= N_{\mathrm{aut}}\,.
\label{eq:3.10}
\end{equation}
The eigenvalue $\lambda_k=0$ is simple if, and only if, 
\begin{equation}
\langle 1/\alpha \rangle \neq 0.
\label{eq:3.11}
\end{equation}
In that case, $\lambda_k(a)$ crosses the imaginary axis transversely, at $a=1$, with nonzero derivative
\begin{equation}
\lambda'_k(1)= -1/ \langle 1/\alpha \rangle = - \langle \alpha \rangle _h
\label{eq:3.12}
\end{equation}
given by the harmonic mean; see \eqref{eq:1.21}.
\item At $a=0$, the eigenvalues $\lambda_k(a)$ are given by the $N$ simple roots of unity
\begin{equation}
\lambda_k^N= \beta = \pm1,
\label{eq:3.13}
\end{equation}
with $k=0,\ldots,N-1$.
Their derivatives with respect to $a$, at $a=0$, are all equal, given by the arithmetic mean
\begin{equation}
\lambda'_k(0) = - \langle \alpha \rangle .
\label{eq:3.14}
\end{equation}
In particular, the purely imaginary eigenvalues $\lambda_k= \pm i$ which occur for $N \equiv 0,2 \,\Mod 4$ and $\beta = +1,-1$, respectively, cross the imaginary axis transversely, for arithmetic means $\langle \alpha \rangle \neq 0$.
\end{enumerate}
\end{prop}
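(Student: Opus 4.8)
The whole proposition reduces to elementary analysis of the single scalar characteristic equation $\mathbf{p}(\lambda,a)=\prod_{m=1}^N(\lambda+a\alpha_m)-\beta$ from \eqref{eq:3.8}, supplemented by the explicit eigenvector recurrence afforded by the bidiagonal-plus-corner shape \eqref{eq:3.1}. The plan is to treat part (i) and part (ii) as two evaluations of the same polynomial, at $\lambda=0$ and at $a=0$ respectively, and to extract all crossing rates by implicit differentiation of $\mathbf{p}$.

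For part (i), I would first substitute $\lambda=0$ into \eqref{eq:3.8}, obtaining $a^N(-1)^{N_{\mathrm{aut}}}=\beta$. Since $a>0$ and both $(-1)^{N_{\mathrm{aut}}}$ and $\beta$ are $\pm1$, this forces $a=1$ together with $(-1)^{N_{\mathrm{aut}}}=\beta$, which is exactly \eqref{eq:3.9}. For the eigenvector \eqref{eq:3.10} I would solve $\mathbf{A}\xi=0$ directly: rows $2,\dots,N$ give the recurrence $\xi_{m-1}=\alpha_m\xi_m$, hence $\xi_m=\xi_1/(\alpha_2\cdots\alpha_m)$, while the corner row reads $\beta\xi_N=\alpha_1\xi_1$ and is consistent precisely under \eqref{eq:3.9}. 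The sign of $\xi_m/\xi_1$ flips exactly at each index with $\alpha_m<0$, so the interior sign changes of $(\xi_1,\dots,\xi_N)$ number the $\alpha_m<0$ with $m\ge 2$; the seam contributes one further change iff $\alpha_1<0$, for both feedback conventions, giving $z(\xi)=N_{\mathrm{aut}}$. Simplicity and transversality then follow from the coefficients of $\mathbf{p}$: the linear coefficient at $a=1$ equals $(-1)^{N_{\mathrm{aut}}}\langle 1/\alpha\rangle N$, so $\lambda=0$ is simple iff $\langle 1/\alpha\rangle\ne0$, and implicit differentiation $\lambda'=-\partial_a\mathbf{p}/\partial_\lambda\mathbf{p}$ at $(\lambda,a)=(0,1)$ yields $\lambda'_k(1)=-1/\langle1/\alpha\rangle=-\langle\alpha\rangle_h$, using $\partial_a\mathbf{p}=N(-1)^{N_{\mathrm{aut}}}$ and $\partial_\lambda\mathbf{p}=(-1)^{N_{\mathrm{aut}}}\langle1/\alpha\rangle N$ there.

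For part (ii), setting $a=0$ collapses $\mathbf{p}$ to $\lambda^N-\beta$, whose roots are the $N$ distinct $N$-th roots of $\beta=\pm1$, hence simple, as in \eqref{eq:3.13}. The uniform crossing rate \eqref{eq:3.14} I would again obtain by implicit differentiation: at $a=0$ both $\partial_a\mathbf{p}=\lambda^{N-1}N\langle\alpha\rangle$ and $\partial_\lambda\mathbf{p}=N\lambda^{N-1}$ carry the common factor $\lambda_k^{N-1}$, which cancels to leave $\lambda'_k(0)=-\langle\alpha\rangle$ for every $k$. Finally, $\pm i$ appear among the roots iff $i^N=\beta$, i.e. $N\equiv0\ \Mod 4$ for $\beta=+1$ and $N\equiv2\ \Mod 4$ for $\beta=-1$; as $\lambda'_k(0)=-\langle\alpha\rangle$ is real, its real part is nonzero exactly when $\langle\alpha\rangle\ne0$, giving transverse crossing.

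The only step that is more than a line of algebra is the sign-change count \eqref{eq:3.10}, which I expect to be the main obstacle: it requires matching the interior flips of the geometric sequence $\xi_m=\xi_1/(\alpha_2\cdots\alpha_m)$ against the two definitions of the zero number $z$ at the seam between $\xi_N$ and $\xi_1$ -- direct for $\beta=+1$, but through $\beta\xi_N=-\xi_N$ for $\beta=-1$ -- and checking that both conventions absorb the $\alpha_1$ contribution so that the total is uniformly $N_{\mathrm{aut}}$. Everything else is bookkeeping on the coefficients and partials of $\mathbf{p}$.
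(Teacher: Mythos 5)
Your proposal is correct and takes essentially the same route as the paper's own proof: both reduce everything to the characteristic polynomial \eqref{eq:3.8}, obtaining \eqref{eq:3.9} and \eqref{eq:3.13} by substituting $\lambda=0$ resp.\ $a=0$, the crossing rates \eqref{eq:3.12} and \eqref{eq:3.14} by implicit differentiation $\lambda'=-\mathbf{p}_a/\mathbf{p}_\lambda$, and the zero-number count \eqref{eq:3.10} from the kernel recurrence \eqref{eq:3.16}. The only cosmetic difference is that you treat the seam between $\xi_N$ and $\xi_1$ uniformly for both feedback signs $\beta=\pm1$, where the paper settles the case $\beta=+1$, $\alpha_1>0$ explicitly and dismisses the remaining cases as analogous.
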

\begin{proof}[\textbf{Proof.}]
We use expansion \eqref{eq:3.8} of the characteristic polynomial.

To prove claim \eqref{eq:3.9} of (i), we just insert $\lambda = 0$ in \eqref{eq:3.8} and recall $a \geq 0$.
Algebraic simplicity claim \eqref{eq:3.11} is equally obvious, for $\langle 1/ \alpha \rangle \neq 0$. 
Implicit differentiation of $\mathbf{p}(a,\lambda(a))=0$ at $a=1,\ \lambda(1)=0$ yields
\begin{equation}
0= \mathbf{p}_a + \mathbf{p}_\lambda \lambda' = (-1)^{N_{\mathrm{aut}}} Na^{N-1}+ (-1)^{N_{\mathrm{aut}}} \langle 1/\alpha \rangle N a ^{N-1}  \lambda',
\label{eq:3.15}
\end{equation}
which proves \eqref{eq:3.12}.

To prove \eqref{eq:3.10} note that any eigenvector $0 \neq \xi \in \ker \mathbf{A}$ of $\lambda= 0$ at $a=1$ satisfies
\begin{equation}
\alpha_m \xi_m = \beta_m \xi_{m-1}
\label{eq:3.16}
\end{equation}
for $m \,\Mod N, \ \beta_1 = \beta$, and $\beta_2 =\ldots=\beta_N=1$; see \eqref{prop:3.1}.
Consider the case $\beta = +1, \ \alpha_1>0$, first.
Then those $2 \leq k \leq N$ with strongly autocatalytic $\alpha_k<0$ indeed provide precisely $N_\mathrm{aut}$ sign changes in the cyclic sequence of $\xi_m$, for $m\, \Mod N$.
The remaining cases of \eqref{eq:3.10} are proved analogously.
This proves claim (i).

To prove claim (ii), we insert $a=0$ in $\eqref{eq:3.8}$ to obtain the algebraically simple $N$-th roots of unity $\lambda_k^N= \beta, \ k=0, \ldots,N-1$.
Implicit differentiation of $\mathbf{p}(a, \lambda(a))=0$ at $a=0, \ \lambda^N= \beta \neq 0$, indeed yields
\begin{equation}
0= \mathbf{p}_a+ \mathbf{p}_\lambda \lambda' = \langle \alpha \rangle N  \lambda^{N-1}+N \lambda^{N-1} \lambda'.
\label{eq:3.17}
\end{equation}
This completes the proof of the proposition.
\end{proof}

\begin{prop}\label{prop:3.3}
At $a=\infty$ we obtain the following limiting strict unstable dimension:%
\begin{equation}
\mu(\infty):= \lim_{a\rightarrow \infty} \  \mu(a)= N_{\mathrm{aut}}\,.
\label{eq:3.18}
\end{equation}
Assume any $\mathbf{A}(a),\ a\geq 0$, possesses an eigenvalue $\lambda_k=0$, in the ordering of proposition \ref{prop:3.1}.
Then $(-1)^{N_{\mathrm{aut}}}= \beta$ and $a=1$, by \eqref{eq:3.9}.

If we also assume $\langle 1/ \alpha \rangle \neq 0$, as in \eqref{eq:1.21}, so that the harmonic mean with sign $\sigma_h= \mathrm{sign} \langle \alpha \rangle_h = \pm1$ exists, then  the limiting strict unstable dimensions $\mu(1^\pm)$ in \eqref{eq:3.2} are
\begin{equation}
\mu(1^\pm)= k+ \tfrac{1}{2} (1 \mp \sigma_h)\,,
\label{eq:3.19}
\end{equation}
and the even/odd parity of $k$ is determined by
\begin{equation}
(-1)^k = \beta \sigma_h \,.
\label{eq:3.20}
\end{equation}
In the limit $a \searrow 0$ and for $N \not\equiv 1- \beta \ \Mod 4,\  \beta = \pm1$, we obtain
\begin{equation}
\mu(0^+)= 2\lfloor(N-1+ \beta)/4\rfloor+1+(1-\beta)/2 \,.
\label{eq:3.21}
\end{equation}
For $N \equiv 1- \beta \ \Mod4$ we assume $\sigma = \mathrm{sign} \langle \alpha \rangle \neq 0$, as in \eqref{eq:1.21}, and obtain
\begin{equation}
\mu(0^+)= 2(N-1+ \beta)/4- \sigma +(1-\beta)/2 \,.
\label{eq:3.22}
\end{equation}
\end{prop}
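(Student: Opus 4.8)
The plan is to read each limiting Morse index off the characteristic polynomial \eqref{eq:3.8}, $\mathbf{p}(\lambda,a)=\prod_m(\lambda+a\alpha_m)-\beta$, combined with the spectral ordering of Proposition \ref{prop:3.1} and the crossing data of Proposition \ref{prop:3.2}. For \eqref{eq:3.18} I rescale $\lambda=a\nu$, turning $\mathbf{p}=0$ into $\prod_m(\nu+\alpha_m)=\beta a^{-N}$. As $a\to\infty$ the right-hand side tends to $0$, so the $N$ rescaled eigenvalues $\nu$ converge, with multiplicity, to the real points $-\alpha_m$. Since every $\alpha_m\neq 0$, none of these limits lies on the imaginary axis; hence for large $a$ each $\lambda=a\nu$ has $\mathrm{Re}\,\lambda$ of the sign of $-\alpha_m$, positive exactly when $\alpha_m<0$. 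Counting gives $\mu(\infty)=\#\{m\mid\alpha_m<0\}=N_{\mathrm{aut}}$.

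For \eqref{eq:3.19}--\eqref{eq:3.20} I start from Proposition \ref{prop:3.2}(i): a zero eigenvalue forces $(-1)^{N_{\mathrm{aut}}}=\beta$ and $a=1$, and under \eqref{eq:3.11} it is simple and crosses transversally with velocity $\lambda_k'(1)=-\langle\alpha\rangle_h$ of sign $-\sigma_h$. In the ordering \eqref{eq:3.4}, \eqref{eq:3.6} the eigenvalue $\lambda_k=0$ sits at index $k$; using that $0$ is the only eigenvalue on the imaginary axis at $a=1$ (Proposition \ref{prop:3.4}), the indices $0,\dots,k-1$ lie strictly in the right half-plane and $k+1,\dots$ strictly in the left. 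Perturbing, $\lambda_k(1^-)$ has sign $\sigma_h$ and $\lambda_k(1^+)$ has sign $-\sigma_h$, so $\lambda_k$ is counted in $\mu$ on exactly one side, giving $\mu(1^\pm)=k+\tfrac12(1\mp\sigma_h)$. The parity \eqref{eq:3.20} is the key step: differentiating the two representations of $\mathbf{p}$ in $\lambda$ at $\lambda=0,\,a=1$ yields on one hand $\mathbf{p}_\lambda(0)=(\prod_m\alpha_m)\sum_m\alpha_m^{-1}=(-1)^{N_{\mathrm{aut}}}N\langle 1/\alpha\rangle=\beta N\langle 1/\alpha\rangle$, of sign $\beta\sigma_h$, and on the other hand $\mathbf{p}_\lambda(0)=\prod_{j\neq k}(-\lambda_j)$, where conjugate pairs contribute a positive factor $|\lambda|^2$ and a real eigenvalue $\lambda_j$ contributes $-\lambda_j$. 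Hence $\mathrm{sign}\,\mathbf{p}_\lambda(0)=(-1)^{r_+}$ with $r_+$ the number of strictly positive real eigenvalues; since the $k$ right half-plane eigenvalues split into $r_+$ real ones and an even number of nonreal ones, $(-1)^k=(-1)^{r_+}=\beta\sigma_h$.

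For \eqref{eq:3.21}--\eqref{eq:3.22} I invoke Proposition \ref{prop:3.2}(ii): at $a=0$ the eigenvalues are the simple $N$-th roots of $\beta$ on the unit circle, all with the common real first-order velocity $\lambda_k'(0)=-\langle\alpha\rangle$. For small $a>0$ a root with $\mathrm{Re}\,\lambda_k(0)\neq 0$ keeps its half-plane by continuity, while a purely imaginary root $\pm i$ acquires $\mathrm{Re}\,\lambda_k(a)\approx -a\langle\alpha\rangle$ and is counted iff $\sigma=-1$. Such roots $\pm i$ occur precisely for $N\equiv 1-\beta\ \Mod 4$, in which case there are exactly two of them, contributing $1-\sigma$ to $\mu(0^+)$. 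It then remains to count the roots of unity in the open right half-plane, i.e.\ the indices $j$ with $\cos\theta_j>0$ for $\theta_j=2\pi j/N$ (if $\beta=+1$) or $\theta_j=(2j+1)\pi/N$ (if $\beta=-1$); sorting by $N\bmod 4$ and matching the integer floor gives the count $2\lfloor(N-1+\beta)/4\rfloor+1+(1-\beta)/2$ when $N\not\equiv 1-\beta$, which is \eqref{eq:3.21}, and $2(N-1+\beta)/4-1+(1-\beta)/2$ when $N\equiv 1-\beta$, which after adding $1-\sigma$ becomes \eqref{eq:3.22}.

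The conceptually delicate point is the parity identity \eqref{eq:3.20}, resting on the fact that only strictly positive real eigenvalues flip the sign of $\mathbf{p}_\lambda(0)$ whereas nonreal ones enter in conjugate pairs; the routine but error-prone labor is the right-half-plane root count matching the floor expressions in \eqref{eq:3.21}--\eqref{eq:3.22}, to be verified in each of the four residue classes of $N$ modulo $4$.
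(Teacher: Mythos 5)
Your proposal is correct in substance, and for \eqref{eq:3.18}, \eqref{eq:3.19}, \eqref{eq:3.21} and \eqref{eq:3.22} it follows essentially the paper's own route: the rescaling $\lambda=a\nu$ for $a\to\infty$, the transverse crossing of the simple zero eigenvalue at $a=1$ combined with the spectral ordering, and the root-of-unity count at $a=0$ corrected by the common velocity $-\langle\alpha\rangle$ of the pair $\pm\mathrm{i}$. The genuine difference is your proof of the parity identity \eqref{eq:3.20}. The paper obtains it globally: absence of zero eigenvalues on $1<a<\infty$ forces $\mu(1^+)$ and $\mu(\infty)=N_\mathrm{aut}$ to share the same parity, whence $(-1)^k\sigma_h=(-1)^{\mu(1^+)}=(-1)^{N_\mathrm{aut}}=\beta$. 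You instead argue locally at $a=1$: comparing the two expressions for $\mathbf{p}_\lambda(0)$, namely $(\prod_m\alpha_m)\sum_m\alpha_m^{-1}$, of sign $\beta\sigma_h$, versus $\prod_{j\neq k}(-\lambda_j)$, of sign $(-1)^{r_+}$ with $r_+$ the number of positive real roots, and then using that nonreal right-half-plane eigenvalues pair up, so $(-1)^k=(-1)^{r_+}$. This is a valid and self-contained alternative; it buys independence from the behavior of the spectrum on $(1,\infty)$ and from $\mu(\infty)$, whereas the paper's homotopy argument reuses facts already in hand and avoids the sign bookkeeping over the root product. Your deferral of the residue-class case check in \eqref{eq:3.21}--\eqref{eq:3.22} matches the paper's own level of detail (``elementary counting''), and the counts do come out right.

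One citation must be repaired to avoid circularity: for the claim that $\lambda_k=0$ is the \emph{only} imaginary-axis eigenvalue at $a=1$ you invoke proposition \ref{prop:3.4}, but the paper's proof of that proposition uses precisely the parity property \eqref{eq:3.20} you are in the course of proving. The fact you need follows directly from proposition \ref{prop:3.1}, as the paper does it: the orderings \eqref{eq:3.4}, \eqref{eq:3.6} allow at most two eigenvalues with equal real parts, so a simple zero eigenvalue cannot coexist with a nonzero purely imaginary conjugate pair. With that substitution your argument stands.
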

\begin{proof}[\textbf{Proof.}]
To prove $\mu(\infty)= N_{\textrm{aut}}$ we invoke the characteristic equation \eqref{eq:3.8}, once again.
Trivially, $-\alpha_m \neq 0$ with $m=1,\ldots,N$ enumerate the limits of $\lambda_k(a)/a$ with $k=0,\ldots,N-1$, for $a\rightarrow+ \infty$.
This proves claim \eqref{eq:3.18}.

To prove claim \eqref{eq:3.19} we consider the simple eigenvalue $\lambda_k=0$ at $a=1$ with eigenvector $\xi_k$ and $z(\xi_k)= N_{\mathrm{aut}}$, from proposition \ref{prop:3.2}.
In particular, the ordering of $\mathrm{Re}\,\lambda_k$ in proposition \ref{prop:3.1} implies $\mu(a)=k$ for the \emph{strict} unstable dimension $\mu$ at $a=1$.
Our assumption $\langle 1/ \alpha \rangle \neq 0$ in \eqref{eq:1.21} also implies transverse crossing \eqref{eq:3.12} of $\lambda_k(a)$, at $a=1$, so that%
\begin{equation}
\mathrm{sign} \ \lambda_k(a)=\sigma_h \cdot \mathrm{sign}(1-a),
\label{eq:3.23}
\end{equation}
for small $|1-a|>0$.
Because proposition \ref{prop:3.1} excludes any other purely imaginary eigenvalues at $a=1$, besides the simple eigenvalue $\lambda_k(a)=0$, this proves claim \eqref{eq:3.19}.

To prove \eqref{eq:3.20} we observe that the absence of zero eigenvalues for $1<a<\infty$ implies that $\mu(1^+)$ and $\mu(\infty)= N_{\mathrm{aut}}$ share the same parity.
Therefore \eqref{eq:3.19} implies \eqref{eq:3.20} via%
\begin{equation}
\beta=(-1)^{N_{\mathrm{aut}}}= (-1)^{\mu(1^+)}= (-1)^k \cdot (-1)^{(1-\sigma_h)/2}=(-1)^k \sigma_h .
\label{eq:3.24}
\end{equation}

It remains to consider $\mu(0^+)$ with eigenvalues $\lambda$ at $a=0$ given by the simple roots of unity $\lambda^N= \beta = \pm 1$.
For the strict unstable dimension $\mu(0)$, which ignores purely imaginary eigenvalues, elementary counting shows $\mu(0)=2\lfloor(N-2+ \beta)/4\rfloor+1+ (1- \beta)/2$.
For $N \not\equiv 1-\beta\ \Mod4$, purely imaginary roots $\lambda_k=\pm i$ do not occur.
Therefore $\lfloor(N-2+ \beta)/4\rfloor = \lfloor(N-1+ \beta)/4\rfloor$ proves \eqref{eq:3.21}.
For $N \equiv 1- \beta\ \Mod 4$, the purely imaginary pair (as all other roots) satisfies $\lambda'_k = \mathrm{Re}\, \lambda'_k = - \langle \alpha \rangle$; see \eqref{eq:3.14}.
Therefore
\begin{equation}
\mathrm{sign}\  \mathrm{Re}\, \lambda'_k(a)= -\sigma,
\label{eq:3.25}
\end{equation}
for small $a>0$, and hence $\mu(0^+)=\mu(0)+1-\sigma$.
Insertion of our elementary count for $\mu(0)$ proves \eqref{eq:3.22}, and the proposition.
\end{proof}
\begin{prop}\label{prop:3.4}
As in \eqref{eq:3.19}, \eqref{eq:3.20} suppose any $\mathbf{A}(a), \ a\geq 0$ possesses an eigenvalue $\lambda_k=0$, i.e. $(-1)^{N_{\mathrm{aut}}} = \beta$ and $a=1$.
Assume $\langle 1/ \alpha \rangle \neq 0$.

Then $\mathbf{A}(a)$ does not possess any other zero or purely imaginary eigenvalues, for any $1 \leq a< \infty$, except that simple zero eigenvalue $\lambda_k=0$ at $a=1$.
\end{prop}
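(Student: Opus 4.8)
The plan is to test the characteristic equation \eqref{eq:3.8} directly on the imaginary axis, exploiting its product structure. I would set $\lambda = i\omega$ with $\omega \in \mathbb{R}$ (the value $\omega = 0$ covering the zero eigenvalue), so that \eqref{eq:3.8} reads $\prod_{m=1}^N(i\omega + a\alpha_m) = \beta$. Since $|\beta| = 1$, passing to moduli yields the real scalar identity
\[
\prod_{m=1}^N (\omega^2 + a^2\alpha_m^2) = |\beta|^2 = 1.
\]

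The key step is then a single lower bound. Because $\omega^2 \geq 0$, each factor obeys $\omega^2 + a^2\alpha_m^2 \geq a^2\alpha_m^2$, with equality precisely when $\omega = 0$. Multiplying over $m$ and invoking the normalization $\prod_m \alpha_m = (-1)^{N_{\mathrm{aut}}}$ from \eqref{eq:1.20}, which gives $\prod_m \alpha_m^2 = 1$, I obtain
\[
1 = \prod_{m=1}^N(\omega^2 + a^2\alpha_m^2) \ \geq\ a^{2N}\prod_{m=1}^N \alpha_m^2 = a^{2N}.
\]
Hence $a^{2N} \leq 1$, that is $a \leq 1$. In the range $1 \leq a < \infty$ this forces $a = 1$, and moreover equality throughout the displayed chain, so each factor satisfies $\omega^2 + \alpha_m^2 = \alpha_m^2$; since $\alpha_m \neq 0$ in the nondegenerate $N$-cycle, this pins $\omega = 0$. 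Thus the only purely imaginary eigenvalue occurring for $a \geq 1$ is $\lambda = 0$ at $a = 1$, and its algebraic simplicity under the hypothesis $\langle 1/\alpha \rangle \neq 0$ is already supplied by proposition \ref{prop:3.2}(i), via \eqref{eq:3.11}.

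I do not anticipate a serious obstacle here: the argument reduces to a one-line modulus estimate, once one recognizes that the normalization $\prod_m |\alpha_m| = 1$ exactly compensates the unit modulus of the off-cycle feedback $\beta$, leaving the diagonal product alone to contribute the factor $a^{2N}$. The only point requiring a moment of care is the equality discussion at $a = 1$, needed to exclude a nonzero $\omega$ there and thereby confirm that no genuine nonzero Hopf crossing competes with the simple zero eigenvalue; this follows immediately from $\alpha_m \neq 0$. It is worth remarking that the estimate is sharp only at $a = 1$, which is consistent with the transverse crossing \eqref{eq:3.12} and with the fact that all remaining eigenvalues have been pushed strictly off the imaginary axis for $a > 1$.
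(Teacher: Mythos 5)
Your proof is correct, but it takes a genuinely different route from the paper's own argument. The paper never takes moduli: it works inside the Mallet-Paret--Smith spectral framework, starting from the simple zero eigenvalue $\lambda_k=0$ at $a=1$ with zero number $z(\xi_k)=N_{\mathrm{aut}}$ (proposition \ref{prop:3.2}), and using the parity relation \eqref{eq:3.20} together with the transverse crossing \eqref{eq:3.12} to show that the adjacent pair of simple real eigenvalues $\lambda_{2k'-1}>\lambda_{2k'}$ in the ordering \eqref{eq:3.6} straddles zero for all $a>1$; the strict ordering and pairing of proposition \ref{prop:3.1} then blocks every other eigenvalue from reaching the imaginary axis, by a continuation argument. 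Your one-line modulus estimate $1=\prod_m(\omega^2+a^2\alpha_m^2)\geq a^{2N}\prod_m\alpha_m^2=a^{2N}$ on the characteristic equation \eqref{eq:3.8} replaces that entire apparatus, and in fact proves slightly more: purely imaginary eigenvalues are impossible for every $a>1$ without invoking the hypothesis $(-1)^{N_{\mathrm{aut}}}=\beta$ at all --- that hypothesis, together with $\langle 1/\alpha\rangle\neq 0$ and proposition \ref{prop:3.2}(i), is needed only to produce the zero eigenvalue at $a=1$ and certify its simplicity, exactly as you cite it. What the paper's heavier route buys is structural insight consistent with the zero-number machinery organizing all of section \ref{sec3}: it identifies which two eigenvalues of the ordering remain real, simple, and straddling zero throughout $a>1$. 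What yours buys is brevity, elementarity, and complete independence from \cite{M-PS90}. One minor point: in your equality discussion at $a=1$, the role of nondegeneracy $\alpha_m\neq 0$ is to keep every factor strictly positive, so that equality of the products forces equality factor by factor; once that is granted, $\omega^2=0$ follows from any single factor, with no further appeal to $\alpha_m\neq 0$ needed.
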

\begin{proof}[\textbf{Proof.}]
Propositions \ref{prop:3.1} and \ref{prop:3.2}(i) establish the claim at $a=1$.
We have to show that purely imaginary nonzero eigenvalues cannot occur, for any $a>1$.
We only consider the case of positive feedback, $\beta = +1$; the case $\beta = -1$ is analogous.

Since $(-1)^{N_{\mathrm{aut}}}= \beta$, positive feedback $\beta = +1$ implies $N_{\mathrm{aut}}$ is even.
Therefore \eqref{eq:3.10} implies
\begin{equation}
z(\xi_k)= N_{\mathrm{aut}}= 2k',
\label{eq:3.26}
\end{equation}
for the eigenvector $\xi_k$ of $\lambda _k=0$ at $a=1$.
The ordering \eqref{eq:3.6}, \eqref{eq:3.7} of real eigenvalues implies 
\begin{equation}
k \in \{2k'-1,\ 2k'\}
\label{eq:3.27}
\end{equation}
for the two real simple eigenvalues $\lambda_{2k'-1}> \lambda_{2k'} $, one of them being zero, at $a=1$.
We claim $\lambda_{2k'-1}$ and $\lambda_{2k'}$ straddle zero, for all $a>1$: both eigenvalues remain simple, real, and satisfy
\begin{equation}
\lambda_{2k'-1}> 0>\lambda_{2k'} \ .
\label{eq:3.28}
\end{equation}
Then the straddling eigenvalues  $\lambda_{2k'-1}\,, \lambda_{2k'}\,,$ in view of the ordering \eqref{eq:3.6}, \eqref{eq:3.7}, prevent any other real or complex eigenvalues from crossing the imaginary axis, at any $a>1$, and the proposition will be proved.

We prove our remaining claim \eqref{eq:3.28} for $\sigma_h=+1$; the case $\sigma_h=-1$ is analogous.
Parity property \eqref{eq:3.20}, $(-1)^k= \beta \sigma_h=+1$, asserts $k$ is even.
Hence $k=2k'$ in \eqref{eq:3.27}, and transverse crossing \eqref{eq:3.12} implies
\begin{equation}
\lambda_{2k'-1}> 0>\lambda_k=\lambda_{2k'} \ ,
\label{eq:3.29}
\end{equation}
for small $a-1>0$.
Absence of zero eigenvalues, for $a>1$, together with the strict ordering and pairing of proposition \ref{prop:3.1}, \eqref{eq:3.6}, preserves simplicity of the real eigenvalues and perpetuates \eqref{eq:3.29} to all real $a>1$.
This proves the proposition.
\end{proof}


\section{Main result: proof}\label{sec4}
In this section we return to the original setting
\begin{equation}
\begin{aligned}
\dot{x}& =f(\varepsilon,a,x)\,,\\
0 & = f(\varepsilon,a,x^*)\,,\\
\mathcal{A}(\varepsilon, a) & = f_x(\varepsilon,a,x^*)= 
\left(
\begin{array}{cc}
\mathbf{A}(a)+ \varepsilon \mathbf{A}'&\varepsilon \mathbf{B}\\
\varepsilon\mathbf{C} &\varepsilon \mathbf{D}
\end{array}
\right)
\end{aligned}
\label{eq:4.1}
\end{equation}
of our main result, theorem \ref{thm:1.2}, with the normalizations \eqref{eq:1.18} -- \eqref{eq:1.20}.
We also recall the notation $\langle \alpha \rangle, \langle \alpha \rangle_h, $ of \eqref{eq:1.21}, \eqref{eq:1.26} for the arithmetic and harmonic means of the diagonal elements $-a\alpha_m$ of the fast $N$-cycle $\mathbf{A}$, with signs $\sigma, \sigma_h = \pm1$.
For the choice
\begin{equation}
0<a \in J:=(\underline{a}, \bar{a})
\label{eq:4.2}
\end{equation}
of the parameter $a$, depending on the feedback sign $\beta = \beta_1= \pm1$, with the remaining off-diagonal elements of $\mathbf{A}$ normalized to $\beta_2=\ldots=\beta_N=1$, see \eqref{eq:1.27}, \eqref{eq:1.28}.

To prove theorem \ref{thm:1.2} we proceed as follows.
First we fix the open subset $\mathcal{U}\subseteq J \times \mathbb{R}^M$, where we seek global Hopf bifurcation, according to definition \ref{def:2.2} and corollary \ref{cor:2.4}.
In lemma \ref{lem:4.1}, we then check the crucial assumption \eqref{eq:2.11}, i.e.
\begin{equation}
\sum_\mathcal{P} \zhong \neq 0,
\label{eq:4.3}
\end{equation}
for the center indices $\zhong$ of the Hopf points in $\mathcal{U}$, at $\varepsilon=0.$
An elementary perturbation argument will extend \ref{eq:4.3} to small enough $0< \varepsilon <\varepsilon_0$, proving the theorem.

Fix $\varepsilon> 0$ small enough.
Let $(a,x)\in \mathcal{E} \subseteq J \times \mathbb{R}^M$ denote the steady states $f(\varepsilon,a,x)=0$, and distinguish the trivial steady state $f(\varepsilon,a,x^*)=0$ from the complementary ones:
\begin{equation}
\mathcal{E}^*:= J \times\{x^*\}, \qquad \mathcal{E}^c:=\mathcal{E}\setminus\mathcal{E}^*.
\label{eq:4.4}
\end{equation}
Eliminating all nontrivial steady states $\mathcal{E}^c$ from further consideration, we define
\begin{equation}
\mathcal{U}:= (J \times \mathbb{R}^M)\setminus \mathcal{E}^c
\label{eq:4.5}
\end{equation}
as the open background set for global Hopf bifurcation.
In other words, the trivial line $\mathcal{E}^*$ is the set of steady states in $\mathcal{U}$, and 
\begin{equation}
\label{eq:4.4a}
\mathcal{H}:= \{(a_n, x^*)\in\mathcal{E}^* \, | \,(a_n, x^*)\ \textrm{is a Hopf point of}\ f(\varepsilon, a_n, \cdot)\}
\end{equation}
is the set of Hopf points in $\mathcal{U}$.
Note that $\mathcal{H}$ is finite, by analyticity of the linearization $\mathcal{A}=f_x(\varepsilon,a,x^*)$ in $a$.

\begin{lem}
Let $\varepsilon=0$.
Then the number of Hopf points $(a_n,x^*) \in \mathcal{E}^*$ for the fast subsystem $\dot{\xi}= \mathbf{A}(a)\xi$ is finite, and
\begin{equation}
\sum_n \zhong (a_n, x^*)\neq0,
\label{eq:4.6}
\end{equation}
under any of the assumptions \eqref{eq:1.22}--\eqref{eq:1.25}.
\label{lem:4.1}
\end{lem}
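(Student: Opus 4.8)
The plan is to reduce the sign condition \eqref{eq:4.6} to a single comparison of unstable dimensions at the two endpoints of $J$, via a telescoping of the center index \eqref{eq:2.8}. First I dispose of finiteness: since $\mathbf{A}(a)$ is real-analytic in $a$, each eigenvalue branch $\lambda_k(a)$ is an algebraic function, so the condition $\mathrm{Re}\,\lambda_k(a)=0$ with $\lambda_k\neq 0$ holds on a discrete set unless $\mathrm{Re}\,\lambda_k\equiv 0$; the latter is excluded because $\lambda_k(a)/a$ tends to a nonzero $-\alpha_m$ as $a\to\infty$, by \eqref{eq:3.8}. As $a\searrow 0$ the eigenvalues approach the simple roots of unity \eqref{eq:3.13}, and for large $a$ the stabilization $\mu(a)\equiv N_\mathrm{aut}=\mu(\infty)$ of \eqref{eq:3.18} rules out accumulation of Hopf points at $\infty$. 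Hence the Hopf set is finite.

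Next I telescope. Order the interior Hopf points $\underline a<a_1<\dots<a_K<\bar a$ and let $m_n:=\mu$ on the subinterval between consecutive Hopf points, so $m_0=\mu(\underline a^+)$ and $m_K=\mu(\bar a^-)$. At each $a_n$ only \emph{nonzero} purely imaginary pairs cross, a genuine zero eigenvalue occurring only at $a=1$ by \eqref{eq:3.9} and being excluded from $J$ by the choice \eqref{eq:1.28}. Hence $m_n-m_{n-1}$ is even, the crossing number $\chi(a_n)=\tfrac12(m_n-m_{n-1})$ of \eqref{eq:2.7} is an integer, and the strict unstable dimension $\mu(a_n)$ at the Hopf point shares the common parity of $m_{n-1}$ and $m_n$. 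Consequently all $m_n$ have one and the same parity, $(-1)^{\mu(a_n)}=(-1)^{m_0}$ for every $n$, and \eqref{eq:2.8} telescopes to
\begin{equation*}
\sum_n \zhong(a_n,x^*)=(-1)^{m_0}\sum_{n=1}^K \tfrac12(m_n-m_{n-1})=(-1)^{\mu(\underline a^+)}\cdot\tfrac12\big(\mu(\bar a^-)-\mu(\underline a^+)\big).
\end{equation*}
Thus \eqref{eq:4.6} is equivalent to the single inequality $\mu(\bar a^-)\neq\mu(\underline a^+)$.

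It then remains to evaluate the two boundary dimensions through proposition \ref{prop:3.3}. The lower value $\mu(\underline a^+)=\mu(0^+)$ is given by \eqref{eq:3.21} when $N\not\equiv 1-\beta$ (so $\underline a=0$) and by \eqref{eq:3.22} when $N\equiv 1-\beta$ (so $\underline a>0$ is small and $\sigma$ enters). The upper value is $\mu(\bar a^-)=\mu(\infty)=N_\mathrm{aut}$ from \eqref{eq:3.18} when $(-1)^{N_\mathrm{aut}}=-\beta$ (so $\bar a=+\infty$), and $\mu(\bar a^-)=\mu(1^-)$ from \eqref{eq:3.19}--\eqref{eq:3.20} when $(-1)^{N_\mathrm{aut}}=\beta$ (so $\bar a<1$ near $1$, with no Hopf point in $(\bar a,1)$ by proposition \ref{prop:3.4}). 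In the latter subcase I would first combine $z(\xi_k)=N_\mathrm{aut}$ from \eqref{eq:3.10}, the pairing \eqref{eq:3.5}/\eqref{eq:3.7}, and the parity \eqref{eq:3.20} to pin down $k$ and obtain the compact identity $\mu(1^-)=N_\mathrm{aut}+\sigma_h$, valid for both $\beta=\pm1$. Substituting these expressions into $\mu(\bar a^-)-\mu(\underline a^+)$ reproduces, up to an additive $\sigma_h$ present exactly when $\bar a<1$, the integer appearing in each of \eqref{eq:1.22}--\eqref{eq:1.25}, equivalently in \eqref{eq:1.35}.

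The main obstacle is the bookkeeping that shows $\mu(\bar a^-)\neq\mu(\underline a^+)$ holds precisely under the stated hypotheses, and here the decisive point is a parity dichotomy. Writing $Y$ for the relevant integer (e.g. $N_\mathrm{aut}-2\lfloor N/4\rfloor-1$ in case \eqref{eq:1.22}), the fixed parities of $N_\mathrm{aut}$ and of the floor term force $Y$ to be even when $\bar a=+\infty$ and odd when $\bar a<1$. When $\bar a=+\infty$ the difference equals $Y$, so nonvanishing reduces to $|Y|>1$; when $\bar a<1$ the difference equals $Y+\sigma_h$ (even, since $Y$ is odd), so nonvanishing reduces to $Y\neq-\sigma_h$. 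The formulation ``$Y=\sigma_h$ or $|Y|>1$'' of \eqref{eq:1.22}--\eqref{eq:1.25} is exactly the union of these two requirements across the two parity regimes, the inapplicable branch being vacuous by parity. Carrying out this verification in all four cases (i)--(iv), with the extra $\sigma$-term from \eqref{eq:3.22} in cases \eqref{eq:1.23} and \eqref{eq:1.25}, completes the proof of \eqref{eq:4.6}.
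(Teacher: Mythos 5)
Your proposal is correct and follows essentially the same route as the paper's proof: the constant parity prefactor $(-1)^{\mu(a_n)}$, the telescoping of crossing numbers to the endpoint difference $\mu(\bar a^-)-\mu(\underline a^+)$, the evaluations $\mu(\infty)=N_\mathrm{aut}$ resp.\ $\mu(1^-)=N_\mathrm{aut}+\sigma_h$ via propositions \ref{prop:3.2}--\ref{prop:3.4}, and the parity dichotomy that makes exactly one regime of each two-line assumption \eqref{eq:1.22}--\eqref{eq:1.25} operative. Your explicit finiteness argument and the observation that in the $\bar a<1$ regime \emph{either} line of the assumption yields $Y\neq-\sigma_h$ are in fact slightly more careful than the paper's terse ``due diligence'' remark.
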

\begin{proof}[\textbf{Proof.}]
By transverse crossings of eigenvalues, in proposition \ref{eq:3.2}, we may consider $\underline{a}=0$ in \eqref{eq:1.27}, and $\bar{a}=1,\infty$ in \eqref{eq:1.28}, for $\varepsilon=0$, without loss of generality.
Let us consider the case $(-1)^{N_{\mathrm{aut}}}= -\beta,\ \bar{a}= \infty$, first, where proposition \ref{prop:3.2} asserts absence of zero eigenvalues of $\mathbf{A}(a)$, for all $a\geq 0$.
Then
\begin{equation}
\zhong(a_n,x^*)=(-1)^{\mu(a_n)} \chi(a_n)
\label{eq:4.7}
\end{equation}
all share the same $n$-independent prefactor
\begin{equation}
(-1)^{\mu(a_n)}= (-1)^{\mu(\infty)}= (-1)^{N_{\mathrm{aut}}}= -\beta .
\label{eq:4.8}
\end{equation}
Indeed all strictly unstable dimensions $\mu(a)$ share the same even/odd parity, by absence of any zero eigenvalue.
Moreover, the local crossing numbers $\chi(a_n)$ at $\underline{a}=0<a_n<\bar{a}=\infty$ just add up to a net crossing number $\chi:=\sum \chi(a_n)$, and therefore
\begin{equation}
2 \sum_n \zhong (a_n,x^*)= -2\beta\cdot\chi= -\beta \cdot(\mu(\infty)-\mu(0^+)).
\label{eq:4.9}
\end{equation}
It remains to show $\mu(\infty)-\mu(0^+) \neq 0$.

Several subcases arise.
Consider the case $\beta=+1$ first.
Then our assumption $(-1)^{ N_{\mathrm{aut}}}=-\beta$ implies that ${ N_{\mathrm{aut}}}$ is odd.
Suppose $N \not\equiv 0 \ \Mod 4$.
Then substitution of \eqref{eq:3.21} and \eqref{eq:3.18} from proposition \ref{prop:3.3} imply
\begin{equation}
\mu(\infty)- \mu(0^+)= N_{\mathrm{aut}}-2\lfloor N/4\rfloor-1\,.
\label{eq:4.10}
\end{equation}
On the other hand, the present case $\beta=+1,\ N \not\equiv 0 \ \Mod 4$ allows us to invoke the two-line assumption \eqref{eq:1.22}.
Because $N_{\mathrm{aut}}$ is odd, however, the first line of that assumption containing $\sigma_h=\pm 1$ cannot hold.
The second line of assumption \eqref{eq:1.22}, which does not contain $\sigma_h$, therefore asserts $\mu(\infty)-\mu(0^+) \neq 0$ in \eqref{eq:4.10}, as claimed above.

For $\beta=+1,\ N \equiv 0 \ \Mod4$, we similarly substitute \eqref{eq:3.22} and \eqref{eq:3.18} from proposition \ref{prop:3.3}.
Invoking the second line of assumption \eqref{eq:1.23}, without $\sigma_h$, then implies
\begin{equation}
\mu(\infty)- \mu(0^+)= N_{\mathrm{aut}}-2 N/4+ \sigma \neq 0.
\label{eq:4.11}
\end{equation}

The cases arising from $\beta = -1$, where ${ N_{\mathrm{aut}}}$ is even, are treated analogously, invoking the second lines of assumptions \eqref{eq:1.24}, \eqref{eq:1.25} without $\sigma_h$, this time.

It remains to consider the casuistics of $(-1)^{ N_{\mathrm{aut}}}= +\beta$.
This time, a simple eigenvalue $\lambda_k=0$ appears at $a=\bar{a}=1$.
Proposition \ref{prop:3.3} guarantees absence of Hopf points, for $a\geq1$, i.e. $\mu(1 +)=\mu (\infty)$.
Proposition \ref{prop:3.2} asserts the transverse crossing direction $\mathrm{sign}\, \lambda_k'(\alpha)= -\sigma_h \neq 0$, at $a=1$, i.e. $\mu(1^+)- \mu(1^-)= -\sigma_h$.
Together, this shows
\begin{equation}
\begin{aligned}
2\ \sum_n\zhong (a_n,&x^*)\ =\ -2 \beta\cdot \chi= -\beta \cdot(\mu(1^-) - \mu(0^+))= \\
\ &=\ -\beta\cdot(\mu(\infty)- (\mu(\infty)- \mu(1^+))- (\mu(1^+)- \mu(1^-))- \mu(0^+))\\
\ &=\ \beta\cdot (N_{\mathrm{aut}} - 0 + \sigma_h - \mu(0^+)).
\end{aligned}
\label{eq:4.12}
\end{equation}
Due diligence analogous to \eqref{eq:4.10} -- \eqref{eq:4.12}, but invoking the first lines of our assumptions \eqref{eq:1.22}--\eqref{eq:1.25} which contain $\sigma_h$\,, this time, completes the proof of the lemma.
\end{proof}
\begin{proof}[\textbf{Proof of theorem \ref{thm:1.2}}]
Let $(a_n,x^*)$ enumerate the finitely many Hopf points of $\mathbf{A}(a)$, at $\varepsilon=0$, ordered such that $0<a_1<a_2\ldots<a_{\bar{n}}$.
Recall that $a=0$ is a Hopf point if, and only if, $N \equiv 1-\beta \ \Mod 4$, by proposition \ref{prop:3.2}.
Fix any $0<\underline{a}<a_1$, in that case, and $\underline{a}=0$, otherwise; see \eqref{eq:1.27}.
Similarly, $\lambda_k(a)=0$ occurs, for any $a\geq0$, if, and only if, $(-1)^{N_{\mathrm{aut}}}= \beta$.
By proposition \ref{prop:3.4}, we then have $a_{\bar{n}}<1$ and we may fix any $a_{\bar{n}}<\bar{a}<1$, in that case, and $\bar{a}= + \infty$ otherwise; see \eqref{eq:1.28}.
To prove theorem \ref{thm:1.2} we invoke corollary \ref{cor:2.4}.
In the setting \eqref{eq:4.1} -- \eqref{eq:4.5}, it is therefore our only remaining task to show 
\begin{equation}
\sum_{n=1}^{\bar{n}(\varepsilon)} \,\zhong (a_n(\varepsilon),x^*) \neq 0,
\label{eq:4.13}
\end{equation}
for small enough $0<\varepsilon<\varepsilon_0$, and for all perturbed Hopf points $(a_n(\varepsilon), x^*)$ of the perturbed matrix family $\mathcal{A}(\varepsilon, a)$ in \eqref{eq:4.1}. Again, $a_n(\varepsilon) \in J= (\underline{a},\bar{a})$ are ordered such that
\begin{equation}
\underline{a}<a_1(\varepsilon)<a_2(\varepsilon)<\ldots<a_{\bar{n}(\varepsilon)}< \bar{a}\,.
\label{eq:4.14}
\end{equation}
At $\varepsilon=0$, and for any $a \in \mathbb{R}$, the matrix $\mathcal{A}$ is block diagonal, with upper left block $\mathbf{A}(a)$, upper right block  $\varepsilon \mathbf{B}=0$, lower left block $\varepsilon \mathbf{C}=0$, and lower right block $\varepsilon \mathbf{D}=0$.
Standard perturbation theory then asserts $\mathrm{spec}\ \mathcal{A}$ to be given by two disjoint components:
\begin{equation}
\mathrm{spec}\ \mathcal{A}(\varepsilon,a)= (\mathrm{spec}\ \mathbf{A}(a)+ o(1))\ \dot{\cup} \  \varepsilon (\mathrm{spec}\ \mathbf{D}+ o(1));
\label{eq:4.15}
\end{equation}
see for example \cite{Kato80}, section II.6.
Uniformity of the spectral splitting for $a\rightarrow \infty$ follows from the diagonal limit of $a^{-1} \mathcal{A}(\varepsilon, a)$.
Disjointness, for $\varepsilon_0$ small enough and uniformly for $a \in J$, follows from proposition \ref{prop:3.1}, and the excision of the only zero eigenvalue of $\mathbf{A}$ at $a=1$ in case $(-1)^{N_{\mathrm{aut}}}= \beta$.
By our hyperbolicity assumption on $\mathbf{D}$, in theorem \ref{thm:1.2}, only the perturbed part $\mathrm{spec}\,\mathcal{A}+o(1)$ contributes any Hopf points to the sum \eqref{eq:4.13}, and eigenvalues $\lambda_k=0$ remain excluded, for $a \in J = (\underline{a},\bar{a})$ and $0<\varepsilon<\varepsilon_0$.
Note, however, that the specific finite number $\bar{n}(\varepsilon)$ of Hopf points may fluctuate, due to conceivably  nontransverse crossings of the Hopf eigenvalues of $\mathbf{A}$ through the imaginary axis, for some $a \in J$ at $\varepsilon=0$.
Nevertheless
\begin{equation}
\zhong (a_n(\varepsilon),x^*)=-\beta\, (-1)^{M-N}\, \mathrm{sign} \,\det \mathbf{D}\cdot \chi(a_n(\varepsilon),x^*)
\label{eq:4.16}
\end{equation}
allows summation of the crossing numbers $\chi(a_n(\varepsilon),x^*)$, over $n$, to a net crossing number $\chi$, as in \eqref{eq:4.9}, \eqref{eq:4.12}, with
\begin{equation}
2 \sum_n \zhong (a_n(\varepsilon),x^*)=-\beta\, (-1)^{M-N}\, \mathrm{sign} \,\det \mathbf{D} \cdot(\mu(\varepsilon,\bar{a})- \mu(\varepsilon,\underline{a})).
\label{eq:4.17}
\end{equation}
Here the unstable dimensions $\mu$ are evaluated at the fixed boundaries $\underline{a}$ and $\bar{a}$ specified in \eqref{eq:1.27}, \eqref{eq:1.28}, where $\mathcal{A}$ is hyperbolic.
Since \eqref{eq:4.15} implies
\begin{equation}
\mu(\varepsilon,a)=\mu(0,a), \quad \textrm{at}\  \ a= \underline{a}, \bar{a},
\label{eq:4.18}
\end{equation}
lemma \ref{lem:4.1} establishes claim \eqref{eq:4.13}, for small enough $0< \varepsilon<\varepsilon_0$.
This proves our main result, theorem \ref{thm:1.2}.
\end{proof}

We conclude this section with a few comments on the limitations of our result.
Restrictions on the globality trichotomy \eqref{eq:2.15} of the connected component $\mathcal{C}$, in corollary \ref{cor:2.4}, are caused by our domain $\mathcal{U}= (J \times \mathbb{R}^M)\setminus \mathcal{E}^c$; see \eqref{eq:4.5}.
Indeed the second option of \eqref{eq:2.15} calls intersections of $\partial \mathcal{C}$ with 
\begin{equation}
\partial \mathcal{U}= \mathcal{E}^c \cup (\{ \underline{a}, \bar{a}\} \times \mathbb{R}^M)
\label{eq:4.19}
\end{equation}
global.
Here we omit $\bar{a}$ in case $\bar{a}=+\infty$, of course.
Let $(a_n,x_n)$ be a sequence in $\mathcal{C}$ converging to some $(a_\infty,x_\infty) \in \partial \mathcal{U}$, with bounded relevant virtual periods $q_n\rightarrow q_\infty >0$.

Consider the steady state case $(a_\infty, x_\infty)\in \mathcal{E}^c$, first.
Then $(a_\infty, x_\infty)$ is another Hopf point, $x_\infty \neq x^*$, which we had discarded before.
Such a Hopf point may occur on another branch of equilibria, like the nontrivial branch bifurcating from the trivial branch $\mathcal{E}^*$ at $a=1$, in case $(-1)^{N_{\mathrm{aut}}}= \beta$.
Without further assumptions on such nontrivial steady states, this possibility cannot be excluded.

Let us examine the left boundary $a_\infty= \underline{a}$ next.
In case $N \not\equiv 1-\beta\ \Mod 4$, the left endpoint $\underline{a}=0$ of $\mathcal{E}^*$ is not a Hopf point, for any $0< \varepsilon < \varepsilon_0$.
In fact we could have safely extended our analysis into negative scaling coefficients $a$.
The only reason we did not pursue that direction further was our focus on the sign structure of the nonzero diagonal entries $-a\alpha_m$ of $\mathbf{A}$; indeed $N_{\mathrm{aut}}$ counts autocatalytic $\alpha_m<0$ in our analysis.
Reversing all signs of $\alpha_m$, and replacing $N_{\mathrm{aut}}$ by $N-N_{\mathrm{aut}}$, the case of negative $a$ becomes a trivial corollary to time reversal, of course, as discussed at the end of section \ref{sec1b}.

In case $N \equiv 1-\beta\ \Mod 4$, a Hopf point in $\mathcal{E}^*$ of $\mathbf{A}(a)$ occurs at $a=0$, for $\varepsilon=0$.
Without further information on the bifurcation direction of the associated Hopf branch of bifurcating periodic solutions $(a,x)$, we cannot make any assertions concerning the sign of $a$, locally, for small $\varepsilon>0$.
We therefore eliminated this case by fixing a left boundary $a= \underline{a}>0$ for our domain $\mathcal{U}$, in assumption \eqref{eq:1.27}.

Similarly, the right boundary $a=\bar{a}<1$ of \eqref{eq:1.28}, in case $(-1)^{N_{\mathrm{aut}}}= \beta$, eliminated the simple eigenvalue $\lambda_k=0$ of $\mathbf{A}$ at $a=1$ from consideration.
Indeed suppose the simple eigenvalue $\lambda_k (a)$ of order $\langle \alpha \rangle_h \cdot (1-a)$, is of the same order as the perturbation $\varepsilon$.
Then we may consider the resulting interaction with $\varepsilon \mathbf{D},\ldots$ as a rank-1 perturbation of the $(M-N+1) \times (M-N+1)$ block matrix
\begin{equation}
\left(
\begin{array}{cc}
0 &  \\
 & \mathbf{D}
\end{array}
\right).
\label{eq:4.20}
\end{equation}
By pole assignment, this may result in arbitrary spectrum of order $\varepsilon$, including multiple steady state bifurcations and Hopf points.
Simple planar examples $N=1,\ M=2$ illustrate this.
Our choice of $\bar{a}<1$ circumvents such complications.

Finally, our particular choice of the scaling parameter $a$ prevents meaningful results in case $N=2$.
Indeed the resulting matrices
\begin{equation}
\mathbf{A}(a)=
\left(
\begin{array}{cc}
-a\alpha_1 & \beta \\
 1 & -a \alpha_2 
\end{array}
\right),
\label{eq:4.21}
\end{equation}
with $\alpha_2=(-1)^{N_{\mathrm{aut}}}/\alpha_1$, then provide Hopf points $(a,x^*)$ if, and only if, $N_{\mathrm{aut}}=1,\  |\alpha_1|=|\alpha_2 |=1,\ \beta = -1$, and $| a | < 1$.
Such an interval of Hopf points violates our condition that Hopf points be isolated.
We therefore consider $N\geq 3$, only, and leave the planar case to elementary ODE courses.

\section{Four examples}\label{sec5}
We illustrate theorem \ref{thm:1.2} with four examples, in subsections \ref{sec5.1}--\ref{sec5.4} below: the Oregonator model of the Belousov-Zhabotinsky reaction, Volterra-Lotka  population dynamics, the citric acid or Krebs cycle, and a gene regulatory model for mammalian circadian rhythms.
Before we address these specific examples, we recall our basic approach in comparison to existing literature, and comment on some advantages, generalizations, and limitations.

Our result is intended as a quick first test to establish the possibility of sustained autonomous oscillations in a given network.
Many results are available which exclude oscillations, particularly within the setting \eqref{eq:1.5} of mass action kinetics $r_j=k_jx^{y_j}$.
We have already mentioned \cite{HJ74, Mie17, Fei19} above.
Notably, the results in \cite{Fei19} aim to hold for all positive values of the reaction coefficients $k_j$.
The results in \cite{HJ74, Mie17}, in contrast, usually restrict the reaction rate coefficients.
For an example with detailed balance, we recall the Wegscheider relation \eqref{eq:1.9b}.
Complex balance is not a remedy.
Indeed, the assumed existence of a complex balance equilibrium $x^*$ will, in general, impose certain constraints on the reaction rate coefficients: in fact, there are usually more reaction complexes $y_j, \bar{y}_j$ than metabolites $X_m$.

We repeat that it is not our concern here, or below, to run anecdotal numerical simulations for one or the other parameter set of reaction coefficients.
General results on sustained oscillations usually assert the existence of parameters for Hopf points, typically via a Routh-Hurwitz criterion in general dimension $M$.
Even with contemporary methods of computer algebra, and in small dimensions, this remains a formidable task.
See for example \cite{GES05, EEetal15} and the references there.
Transversality and nonresonance conditions for local Hopf bifurcation are usually left unchecked.

On the surface, we generalize these results in at least two ways.
First, our global approach only requires net crossing numbers $\chi$, alias sums of center indices $\zhong$, rather than detailed local analysis.
Second, we allow for quite general reaction rate functions $r_j=r_j(x)$, rather than just mass action kinetics.
That much ``generality", however, comes with a twist.
A third, and quite substantial, generalization to \emph{reversible} fast $N$-cycles with positive feedback arises in the framework of Jacobi systems \eqref{eq:1.9c}, on the basis of proposition \ref{prop:3.1} and \cite{FuOl88}.
We do not pursue that direction further, here.

Let us address the twist of ``generality", which is directed against mass action.
The very setting \eqref{eq:1.16} of a fast $N$-cycle $\mathbf{A}$ in the Jacobian $f_x=(f_{mm'})$ at steady state $x^*$ requires the freedom of a decomposition of the partial derivatives $f_{mm'}$, alias the partials $r_{jm}= \partial_{x_m}r_j(x^*)$, into the fast $N$-cycle $\mathbf{A}$ and the slow remaining partials of order $\varepsilon$, independently from the fixed rates $r_j(x^*)$ themselves which determine the prescribed steady state $x^*$.
Already Michaelis-Menten kinetics \eqref{eq:1.6} provide such freedom of choice:
\begin{equation}
r_{jm}/r_j = \partial_{x_m} \log r_j = \frac{y_{jm}}{x_m^*} \frac{1}{1+c_{jm}x_m^*} \in (0,1)\cdot y_{jm}/x^*_m.
\label{eq:5.1}
\end{equation}
Here we may choose $x_m^*$ as small as we like to guarantee any required range of $r_{jm}$, even for prescribed $r_j(x^*)$.
We thus assumed our choice of $\mathbf{A}$, and the slow-fast decomposition \eqref{eq:1.16} on the linear level, to be independent from $x^*$.
See also the sensitivity analysis \cite{BFie18} and the recent substantial generalizations \cite{Vas20}, which are based on the same concept.
Evidently such independence of $r_{jm}$ from $r_j$ fails in the pure mass action case, where all $c_{jm}=0$.

A second caveat concerns our choice of the distinguished bifurcation parameter $a>0$ in our normalization \eqref{eq:1.20},  $\ a_m=a \alpha_m,\ \prod \alpha_m=\pm1$, of the diagonal entries $a_m=-f_{mm}$ of the fast $N$-cycle $\mathbf{A}$.
Already for N=2, this scaling prevented a meaningful discussion of $2$-cycles, because the necessary $2 \times 2$ Hopf condition $0=\mathrm{tr}\,\mathbf{A}=a(\alpha_1+\alpha_2)$ became invariant under $a$.
For general $N$, for example, consider the presence of an invariant stoichiometric subspace:
\begin{equation}
\mathbf{c}^T \cdot (\bar{y}_j-y_j)=0,
\label{eq:5.2}
\end{equation}
for all $j$, and one or several fixed vectors $\mathbf{c} \neq 0$.
Indeed, \eqref{sec5.2} implies time invariance of any affine hyperplane $\mathbf{c}^Tx=\mathrm{const.}$ under the network ODE \eqref{eq:1.1}.
For the $N$-cycle $\mathbf{A}$, this implies $\mathbf{c}^T\mathbf{A}=0$, i.e.
\begin{equation}
a \mathbf{c}_{m-1}\alpha_{m-1}=\mathbf{c}_m\beta_m,
\label{eq:5.3}
\end{equation}
for all $m \,\Mod N$.
In particular, the characteristic polynomial \eqref{eq:3.8} then reads 
\begin{equation}
0=a^{-N} \mathbf{p}=\prod_{m=1}^N(\lambda/a+ \alpha_m)- \prod_{m=1}^N \alpha_m,
\label{eq:5.4}
\end{equation}
if we assume all $\mathbf{c}_m$ are nonzero on the $N$-cycle $\mathbf{A}$.
Thus all eigenvalues $\lambda$ simply scale radially outward with $a$, from $\lambda=0$.
Such spectral behavior is adverse to Hopf bifurcation.
For this formal reason, for example, we do not treat the replicator equation or Eigen's hypercycle below, which is normalized to the stochastically motivated invariance $x_1+\ldots+x_N=1$.
Of course, we may select other $1$-parameter paths $a_m=a_m(a)$ in such cases, which are more hospitable towards global Hopf bifurcation as in corollary \ref{cor:2.4}.
Or else, we may look for fast $N$-cycles, in the present setting, which are only supported on metabolites $m$ for which $\mathbf{c}_m=0$, if any.
\subsection{Oregonators}\label{sec5.1}
The celebrated standard \emph{Oregonator} \cite{F07} 
is the simplest, chemically somewhat realistic, model of the Belousov-Zhabotinsky oscillatory reaction mechanism; see 
\cite{Zha91, Zha07}.
In our notation \eqref{eq:1.1} the model can be written as
\begin{equation}
\begin{aligned}
\dot{x}_1&=&r_1(x_2)&-r_2(x_1,x_2)&+r_3(x_1)&-r_4(x_1)& \\
\dot{x}_2&=&-r_1(x_2)&-r_2(x_1,x_2)& & &+cr_5(x_3) \\
\dot{x}_3&=& & &2r_3(x_1) & &-r_5(x_3)
\end{aligned}
\label{eq:5.5}
\end{equation}
with mass action rate laws $r_1,\ldots,r_5$ and a stoichiometrically motivated ``fudge factor" $c>0$.
More generally, we admit arbitrary monotone rate laws $r_j$, e.g. of Michaelis-Menten type.
For the linearization $\mathcal{A}=f_x(x^*)$ in \eqref{eq:1.16} we readily obtain
\begin{equation}
\dot{\xi} = 
\left(
\begin{array}{ccc}
r'_3-r'_4-r_{21} \qquad & r'_1-r_{22}&\quad 0\\
-r_{21} \qquad & -r'_1-r_{22}&\quad cr'_5\\
2r'_3 \qquad & 0&\quad -r'_5\\
\end{array}
\right) \xi.
\label{eq:5.6}
\end{equation}
Here we use the abbreviation $r'_j$ for $r_{jm}$, if $r_j=r_j(x_m)$ depends on a single metabolite, only.
The only feasible $N$-cycle involving $N=3=M$ metabolites is $\mathbf{m}=(3 \ 2 \ 1)$,
\begin{equation}
x_2 \xrightarrow{r_1,r_2}x_1 \xrightarrow{r_3}x_3 \xrightarrow{r_5}x_2,
\label{eq:5.7}
\end{equation}
notably with a strongly autocatalytic step $r_3$.
Comparison between \eqref{eq:5.6} and \eqref{eq:1.16} also tells us to consider $\varepsilon:=r_{21}$ as a small perturbation of the $3$-cycle $\mathbf{A}$ in \eqref{eq:1.18}, with the normalizations
\begin{equation}
\begin{aligned}
a\alpha_1&=r'_4-r'_3, \quad &a \alpha_2&=r'_1+r_{22}, \qquad a \alpha_3=r'_5, \\
\beta &= \mathrm{sign}\,(r'_1-r_{22}), \quad &a^3&=(r'_1+r_{22})r'_5 \cdot |r'_4-r'_3|, \\
N_{\mathrm{aut}} &\in \{0,1\}, &\quad (-1)^{N_{\mathrm{aut}}}&= \mathrm{sign}\,\,\alpha_1 = \mathrm{sign} \,(r'_4-r'_3).\\
\end{aligned}
\label{eq:5.8}
\end{equation}
Since $N=3 \not\equiv 0,2 \ \Mod4$, theorem \ref{thm:1.2} asserts global Hopf bifurcation as follows.
If $\beta= +1$, i.e. for $r'_1> r_{22}$ at steady state $x^*$, we are in case \eqref{eq:1.22}.
Therefore $N_{\mathrm{aut}} \in \{0,1\}$ requires $N_{\mathrm{aut}}=1+\sigma_h$, which implies $N_{\mathrm{aut}}= 0$ and $\sigma_h=-1$.
These contradictory requirements exclude the case $\beta=+1$ of a positive feedback cycle \eqref{eq:5.7}, where $r'_1$ dominates.

For $r'_1<r_{22}$, i.e. for a negative feedback cycle $\beta=-1$, in contrast, the restrictions \eqref{eq:1.24} are satisfied if, and only if $N_{\mathrm{aut}}=0$ or $N_{\mathrm{aut}}=1=-\sigma_h$. 
Specifically this leads to the two cases
\begin{equation}
\begin{aligned}
&r'_1 < r_{22} \quad \mathrm{and} \quad 0 > r'_3-r'_4 \,, \qquad \mathrm{or\  else} \\
&r'_1 < r_{22} \quad \mathrm{and} \quad 0 < r'_3-r'_4 < \left( \tfrac{1}{r'_1+r_{22}}+ \tfrac{1}{r'_5} \right)^{-1}.
\end{aligned}
\label{eq:5.9}
\end{equation}
In conclusion, \eqref{eq:5.9} implies global Hopf bifurcation for the generalized Oregonator with $a \in (0, \bar{a})$, any small $1-\bar{a}>0$, and for $\varepsilon:=r_{21}<\varepsilon_0(\bar{a})$ small enough.
\subsection{Lotka-Volterra networks}\label{sec5.2}
In the introduction we have mentioned the planar Lotka system \cite{Lot1920} for oscillating chemical reactions.
Going far beyond that classical ``predator-prey" system, Volterra \cite{Vol1931} first studied quadratic systems of the general form
\begin{equation}
\dot{x}_m=x_m(c_m+ \sum_{m'}a_{mm'}x_{m'}),
\label{eq:5.10}
\end{equation}
with $m=1,\ldots,M, \ x_m>0$, in the context of ecological population dynamics.
See \cite{Oli14} for an excellent survey.
Usually $a_{mm'},a_{m'm}>0$ indicate mutually beneficial \emph{cooperation} or \emph{symbiosis} between different species $m$ and $m'$, whereas $a_{mm'},a_{m'm}<0$ model mutually toxic \emph{competition}.
 The \emph{predator-prey} case of Lotka is $a_{mm'} \cdot a_{m'm}<0$.
Following standard ecological wisdom, we assume self-inhibition $a_{mm}<0$ to prevent unlimited grow-up or blow-up of single species.

We may rescale any fixed positive equilibrium $x^*$ to $x^*_m=1$, for all $m$, without loss of generality.
Then the linearization of \eqref{eq:5.10} at $x^*$ is given by
\begin{equation}
\dot{\xi}=(a_{mm'})\,\xi.
\label{eq:5.11}
\end{equation}
In particular we may examine any (relabeled) feedback $N$-cycle $\mathbf{m} = (1\ldots N)$,
\begin{equation}
\beta= \mathrm{sign} \, (a_{12}\cdot\ldots\cdot a_{N-1,N} \cdot a_{N1}) = \pm 1
\label{eq:5.12}
\end{equation}
with normalized diagonal
\begin{equation}
a\alpha_m:= -a_{mm}>0,
\label{eq:5.13}
\end{equation}
i.e. $N_{\mathrm{aut}}=0$. 
In particular note $\sigma= \mathrm{sign} \, \langle \alpha \rangle= \mathrm{sign} \, \langle 1 / \alpha \rangle = \sigma_h= +1$.

For positive feedback $N$-cycles $\beta=+1$, conditions \eqref{eq:1.22}, \eqref{eq:1.23} boil down to
\begin{equation}
N\geqslant 5, \mathrm{\ in\ case \ } \beta =+1.
\label{eq:5.14}
\end{equation}
Indeed, suppose $N \not\equiv 0 \ \Mod 4$.
Then $N_{\mathrm{aut}}=0$ in \eqref{eq:1.22} and $\sigma_h=+1$ require $2\lfloor N / 4\rfloor+1 >1$, i.e. $N\geq 5$, for $N \not \equiv 0 \ \Mod 4$.
For $N \equiv 0 \ \Mod 4$, condition \eqref{eq:1.23} and $\sigma= \sigma_h=+1$ similarly requires $N /4=0$ or $|2N/ 4-1|>1$, i.e. $N / 4 \geq 2$.
This proves claim \eqref{eq:5.14}.

For negative feedback $N$-cycles $\beta=-1$, conditions \eqref{eq:1.24}, \eqref{eq:1.25} analogously require 
\begin{equation}
N\geqslant 3,  \mathrm{ \ in \ case}\ \beta =-1.
\label{eq:5.15}
\end{equation}

It remains to specify the parameter region $a \in (\underline{a}, \bar{a})$ of global Hopf bifurcation, according to \eqref{eq:1.27}, \eqref{eq:1.28}.
For case \eqref{eq:5.14} we obtain the conditions
\begin{equation}
\begin{array}{rcl}
\beta &=& +1, \quad \ N\geqslant5; \\
\underline{a}&:=&0 \quad  \mathrm{ for} \ N \not \equiv 0 \ \Mod 4, \quad \mathrm{else}\ \underline{a}>0;\\
\bar{a}&<&1;
\end{array}
\label{eq:5.15a}
\end{equation}
since $N_{\mathrm{aut}}=0$.
Similarly, case \eqref{eq:5.15} summarizes as
\begin{equation}
\begin{array}{rcl}
\beta &=& -1, \quad \ N\geqslant3; \\
\underline{a}&:=&0 \quad  \mathrm {for} \ N \not \equiv 2 \ \Mod 4, \quad \mathrm{else} \ \underline{a}>0;\\
\bar{a}&:=& \infty.
\end{array}
\label{eq:5.15b}
\end{equation}
Assuming other interactions $a_{mm'}$ to be of sufficiently small order $\varepsilon$, and hyperbolicity of the diagonal block $\varepsilon \mathbf{D}$ complementary to the $N$-cycle $\mathbf{m} = (1\ldots N)$, theorem \ref{thm:1.2} implies global Hopf bifurcation for parameters $a \in (\underline{a},\bar{a})$ as described in \eqref{eq:5.15a}, \eqref{eq:5.15b}.


\subsection{Citric acid cycles}\label{sec5.3}
The \emph{citric acid cycle} (CAC) or \emph{Krebs cycle} is a central hub of the oxidative energy metabolism in any cell; see for example \cite{BTGS15}, chapter 17.
Although variants depend on taxonomy, the following 8-cycle of enzymatic Michaelis-Menten reactions is a central feature:%
\begin{equation}
\begin{aligned}
r_m: \ X_m \rightarrow X_{m+1}, \quad m \,\Mod 8\,.
\end{aligned}
\label{eq:5.16}
\end{equation}
Here $X_1 =$ Citrate, $X_2=$ Isocitrate, $X_3=\alpha$-Ketoglutarate, $X_4=$ Succinyl-coenzyme A,
$X_5=$ Succinate, $X_6=$ Fumarate, $X_7=$ Malate, and $X_8=$ Oxaloacetate.
Side reactions and regulatory influences are omitted.
Oscillations have been observed, experimentally, in mitochondria extracts of liver and pancreatic cells; see \cite{MacDetal03}.
One motivation is to understand oscillations in insulin production.

In absence of self-regulation, the fast monomolecular feedback $8$-cycle \eqref{eq:5.16} with rates $r_m=r_m(x_m)$ does not provide global Hopf bifurcation.
Indeed, linearization of \eqref{eq:5.16} at a steady state $x^*$ provides the fast cycle $\mathbf{A}$ in \eqref{eq:1.18}, with $a_m=r'_m= \beta_{m+1}$.
This determines the scaling parameter $a$ to be fixed at $a=1$; see \eqref{eq:1.20}.
Moreover $\lambda_0=0$ is the eigenvalue with maximal real part, by its positive (left) eigenvector and for positive feedback $\beta=+1$; see propositions \ref{prop:3.1}, \eqref{prop:3.2}(i).
In particular, the steady state $x^*$ is linearly stable and Hopf bifurcation is excluded.

Regulatory and self-regulatory controls of the CAC \eqref{eq:5.16}, however, are biologically essential.
Otherwise energy conversion would run high, for no reason and with nowhere to go.
In our setting, regulatory feedbacks are the primary focus.
Consider an arbitrary $M$-cycle \eqref{eq:5.16}, with $m \,\Mod M$.
Assume, however, that metabolite $X_N$ up- or down-regulates reaction $r_0:\ X_0 \rightarrow X_1$, enzymatically, i.e. without any appreciable effect on the mass balance of $X_N$ itself.
In other words,
\begin{equation}
r_0=r_0(x_0,x_N)\,,
\label{eq:5.17}
\end{equation}
and $r_m=r_m(x_m)$ remains monomolecular for all $m \neq 0$.
Deviating from the stereotypical monotonicity assumption \eqref{eq:1.7a} on the partial derivatives $r_{jm}$ we have to admit $r_{0N}<0$ here, to account for the observed inhibitory regulation of reaction $r_0$ by metabolite $X_N$.
Most importantly, we assume the partial derivative $r_{00}$ to be small of order $\varepsilon$, along with all other partial derivatives outside the fast $N$-cycle $\mathbf{m}=(1 \ldots N)$ defined by $r'_1,\ldots,r'_N$, and $r_{0N}$.
This provides a fast $N$-cycle matrix $\mathbf{A}$, as in \eqref{eq:1.18}, given by
\begin{equation}
a_m=r'_m>0, \quad \beta_m=r'_{m-1}>0, \quad \mathrm{except \ for\ } \beta_1= r_{0N}<0,
\label{eq:5.18}
\end{equation}
with $m=1,\ldots,N$.
Normalization yields
\begin{equation}
\alpha_m>0, \ N_{\mathrm{aut}}=0, \quad  \beta=\mathrm{sign} \ r_{0N}=-1, \quad \mathrm{and} \ a^N= |r'_N / r_{0N}|.
\label{eq:5.19}
\end{equation}
In particular our previous Lotka-Volterra discussion of section \ref{sec5.2} applies, with negative feedback $\beta=-1$.
We obtain global Hopf bifurcation for any cycle length $N\geq 3$; see \eqref{eq:5.15}.

Let us return to the CAC case \eqref{eq:5.16} of the experiments in \cite{MacDetal03}.
The following four regulatory terms are listed there:
\begin{equation}
\begin{aligned}
r_8(x_8,x_1), & \quad \mathrm{with} &\quad &N=1;\\
r_8(x_8,x_4), & \quad \mathrm{with} &\quad &N=4;\\
r_3(x_3,x_4), & \quad \mathrm{with} &\quad &N=1;\\
r_5(x_5,x_8), & \quad \mathrm{with} &\quad &N=3.
\end{aligned}
\label{eq:5.20}
\end{equation}
All these regulations act by enzyme inhibition, i.e. $\beta=-1$.
The only exception, excitatory self-regulation $\tilde{r}_3(x_3)$ of $x_3$ by the input $x_3$ itself, can be subsumed into the definition of the rate $r_3$ and is therefore omitted.
Regulation with $N=1$ has to be considered small, in our setting, because it is anticipatory, on the wrong side of the diagonal of $\mathbf{A}$, along the 8-cycle \eqref{eq:5.18}.
Therefore \eqref{eq:5.15} only provides global Hopf bifurcation by the two inhibitory feedbacks $r_8(x_8,x_4)$ and $r_5(x_5,x_8)$, separately, on $a \in (0, \infty)$ and under suitable smallness and nondegeneracy conditions for the large number of remaining entries in a full model of the CAC metabolism.

In summary, our result for $N=4$ in \eqref{eq:5.20} points at the inhibitory effect of $X_4=$ Succinyl-coenzyme A
 on the Citrate synthase reaction $r_8$, which produces $X_1=$ Citrate from $X_8=$ Oxaloacetate, as a possible regulatory source of the observed oscillations.
 
Similarly, $N=3$ in \eqref{eq:5.20} points at the inhibitory effect of $X_8=$ Oxaloacetate on the Succinate dehydrogenase reaction $r_5$ from $X_5=$ Succinate to $X_6=$ Fumarate, as a second possible regulatory cause of oscillations.

\subsection{Circadian gene regulation}\label{sec5.4}
Gene regulatory mechanisms for \emph{circadian rhythms}, on the cell level, have received considerable attention over the past decades; see \cite{Lal17} for some references concerning \emph{drosophila}.
A gene regulatory model for cells in the suprachiasmatic nucleus of mammals was developed, among others, by \cite{Miretal09}; see \cite{MFieMKS13} for simulations of periodic orbits in that model.
The model involves a total of $M=21$ components, with $8$ gene activities transcripted intro mRNAs, $8$ corresponding proteins, and $5$ heterodimers of proteins.
Below we write gene activities in small italics, and proteins in capitals.
Except for dimerizations, all reactions are of Michaelis-Menten type \eqref{eq:1.6}, with numerous enzyme inhibitory feedback cycles.
To indicate such inhibition of a reaction $j:\ X_m \rightarrow X_{\bar{m}}$ by a metabolite $X_{m'}$ we write $X_{m'} \dashv (X_m \rightarrow X_{\bar{m}})$.
Gene transcription in itself does not lower gene activity; rather we may consider such steps $j: \ y_j\rightarrow \bar{y}_j$ as autocatalytic, $\bar{y}_{jm}= y_{jm} \neq 0$, in the language of \eqref{eq:1.3}, without depleting $x_m$.
We indicate such steps by arrows $\mapsto$.
Finally, all components are subject to linear decay rates.

We do not bother here to write down the complete model network, the ODE model, or any of the more than $150$ rate coefficients, many of them guesswork anyway.
Instead we highlight the following cycles:
\begin{equation}
\begin{aligned}
\mathrm{PER}_m + \mathrm{CRY}_n  \rightarrow  \mathrm{PER}_m \mathrm{CRY}_n & \dashv(\mathrm{CLKBMAL} \rightarrow \mathit{per}_m)\mapsto \mathrm{PER}_m \, ;\\
\mathrm{PER}_m + \mathrm{CRY}_n  \rightarrow  \mathrm{PER}_m \mathrm{CRY}_n & \dashv(\mathrm{CLKBMAL} \rightarrow \mathit{cry}_m)\mapsto \mathrm{CRY}_m \, ;\\
\end{aligned}
\label{eq:5.21}
\end{equation}
\begin{equation}
\begin{aligned}
\mathrm{PER}_m \mathrm{CRY}_n  &\dashv (\mathrm{CLKBMAL} \rightarrow \emph{rev-erb}\,\alpha) \mapsto  \mathrm{REV{\hbox{-}}ERB\,\alpha} \dashv\\
      &\dashv (\mathrm{CLKBMAL} \rightarrow \emph{cry}_n) \mapsto \mathrm{CRY}_n \rightarrow \mathrm{PER}_m\mathrm{CRY}_n\,;
\end{aligned}
\label{eq:5.22}
\end{equation}
\begin{equation}
\begin{aligned}
\mathrm{PER}_m \mathrm{CRY}_n  &\dashv (\mathrm{CLKBMAL} \rightarrow \emph{rorc}) \mapsto  \mathrm{RORc} 
\rightarrow \\
                                                          &\rightarrow \emph{cry}_n \mapsto \mathrm{CRY}_n \rightarrow \mathrm{PER}_m\mathrm{CRY}_n\,.
\end{aligned}
\label{eq:5.23}
\end{equation}
Here $m,n \in \{1,2\}$ distinguish two variants of the $\emph{per, cry}$ genes and PER, CRY proteins.
Since all components $x_m$ are subject to decay, and in absence of strong autocatalysis, we obtain corresponding fast $N$-cycles $\mathbf{A}$ with positive $\alpha_m$, in \eqref{eq:1.18}.
In particular $N_{\mathrm{aut}}=0, \ \sigma= \sigma_h=1$, with arbitrary $a>0$, and $\beta = \pm1$ as follows:
\begin{equation}
\begin{aligned}
N=3,\ \beta=-1 \quad \mathrm{in} \ \eqref{eq:5.21};\\
N=5,\ \beta=+1 \quad \mathrm{in} \ \eqref{eq:5.22};\\
N=5,\ \beta=-1 \quad \mathrm{in} \ \eqref{eq:5.23}.\\
\end{aligned}
\label{eq:5.24}
\end{equation}
For the positive fast feedback cycle $N=5$ in \eqref{eq:5.22}, our analysis \eqref{eq:5.15a} in the Lotka-Volterra section \ref{sec5.2} asserts global Hopf bifurcation for scaling parameters $a \in (0,\bar{a})$ and any $\bar{a}<1$, under the usual smallness and hyperbolic nondegeneracy conditions.
For the negative fast feedback cycles $N=3,5$ in \eqref{eq:5.21}, \eqref{eq:5.23}, in contrast, our analysis \eqref{eq:5.15b} in the same section \ref{sec5.2} asserts global Hopf bifurcation for the whole interval of scaling parameters $a \in (0, \infty)$.


\end{document}